\numberwithin{equation}{section}
\theoremstyle{plain}
\newtheorem{theorem}{Theorem}[section]
\newtheorem{proposition}[theorem]{Proposition}
\newtheorem{lemma}[theorem]{Lemma}
\newtheorem{problem}[theorem]{Problem}
\newtheorem{conjecture}[theorem]{Conjecture}
\theoremstyle{definition}
\newtheorem{definition}[theorem]{Definition}
\newtheorem{remark}[theorem]{Remark}
\newtheorem{example}[theorem]{Example}
\newcommand{\f}{\mathbb{F}_q}
\newcommand\F{\mathbb{F}}
\newcommand\Z{\mathbb{Z}}
\newcommand\M{\mathcal{M}}
\DeclareMathOperator{\sign}{sign}
\DeclareMathOperator{\type}{type}
\DeclareMathOperator{\mmod}{\ {\rm mod}\ }
\DeclareMathOperator{\lcm}{lcm}
\begin{document}
	
\title[A new sieve for restricted multiset counting]{A new sieve for restricted multiset counting}

\author{Jiyou Li}
\address{Department of Mathematics, Shanghai Jiao Tong University, Shanghai, P.R. China}
\email{lijiyou@sjtu.edu.cn}

\author{Xiang Yu}
\address{Department of Mathematics, City University of Hong Kong, Kowloon Tong, Hong Kong}
\email{xianyu3-c@my.cityu.edu.hk}

\thanks{This work was supported in part by the National Science Foundation of China (11771280, 12031011) and a GRF grant (Project no. CityU 11303718) from the Research Grants Council of the Government of HKSAR, China.}

\begin{abstract}
	The Li--Wan sieve \cite{LD} is extended to multisets when the underlying set is symmetric. The main ingredient of the proof is the Mobius inversion formula on the poset of partitions of $\{1,2,\dots,k\}$ ordered by refinement. As illustrative applications, we investigate the problems of partitions over finite fields and zero-sum multisets over the additive group $\Z/n\Z$.
\end{abstract}
	
\maketitle	

\section{Introduction}

\subsection{Distinct coordinate counting}

 For a positive integer $k$, let  $D^k$ be the Cartesian product of $k$ copies of a set $D$.
 Let $X$ be a subset of $D^k$. Each element $x\in X$ can be written in a vector form $x=(x_1,x_2,\ldots,x_k)$ with $x_i\in D$. Motivated by various problems arising from coding theory and number theory \cite{CM,CW07,CW10,YD}, we are interested in understanding the structure of the set $\overline{X}$ which consists of ``distinct coordinate vectors" in $X$:
\begin{align}\label{def:overlineX}
\overline{X}=\{(x_1,x_2,\ldots,x_k)\in X:x_i\neq x_j,\ \forall\, 1\leq i\neq j\leq k\},
\end{align}
In particular, when $\overline{X}$ is finite, we want to compute its cardinality, or more generally, evaluate complex function sums defined over $\overline{X}$.
%
%

A natural way to compute $|\overline{X}|$ is using the inclusion-exclusion principle. For integers $1\leq i<j\leq k$, let $X_{ij}=\{(x_1,x_2,\ldots,x_k)\in X: x_i=x_j\}$.
Then the classical inclusion-exclusion principle gives
\begin{align}\label{eq:inclusion-exclusion}
|\overline{X}|&=|X\setminus \bigcup_{1\leq i<j\leq k}{X_{ij}}|=|X|-\sum _{1\leq i<j\leq k}|X_{ij}|+\sum_{1\leq i<j\leq k,1\leq
s<t\leq k} |X_{ij}\bigcap X_{st}|-\cdots+(-1)^{k \choose 2} |\bigcap_{1\leq i<j\leq k}X_{ij}|.
 \end{align}
However, the number of terms in the above summation is $2^{{k \choose 2}}$, which easily causes large total errors. In fact, this is a
 major bottle-neck of the inclusion-exclusion sieve.
 In most applications, people use Bonferroni inequalities to get weaker bounds such as
 \begin{align}\label{1.0}
 |\overline{X}|\geq |X|-\sum _{1\leq i<j\leq k}|X_{ij}|.
 \end{align}
These bounds play important roles in many problems in combinatorics, number theory, probability theory and theoretical computer sciences. However, they are usually restrictive. For example,  (\ref{1.0}) is only nontrivial when $|X|>{k \choose 2}$. A natural question is then to find simpler explicit formulas or sharper bounds.

 A formula discovered by Li and Wan \cite{LD} gives an approach to compute $|\overline{X}|$  through a simpler way.
The new formula, which will be described in Theorem \ref{thm:sieve},  shows that there exists a large number of cancellations on the right-hand side of \eqref{eq:inclusion-exclusion}. The number of terms in the summation is significantly reduced from $2^{{k \choose 2}}$ to $k!$, or even fewer, to the partition function $p(k)$ if $X$ is symmetric.

There is a natural action of the symmetric group $S_k$  on elements
of ${X}$ defined as follows. For $\tau\in S_k$ and $x=(x_1,x_2,\ldots,x_k)\in X$, define $\tau \circ x:=(x_{\tau(1)},x_{\tau(2)}, \ldots, x_{\tau(k)})$.
   Let $X_\tau$ be the set of elements in $X$ that are invariant under the action by $\tau$.  Since each $\tau\in S_{k}$ can be written as a product of disjoint cycles $\tau=\tau_1\tau_2\cdots \tau_{c(\tau)}$ uniquely up to the order of the cycles, clearly we have
\begin{equation*}\label{def:Xtau}
X_\tau=\{(x_1,x_2,\ldots,x_k)\in X:x_i=x_j,\ \forall\, i,j \in \tau_m,\ 1\leq m\leq c(\tau)\}.
\end{equation*}

 \begin{theorem}[\cite{LD}, Theorem 1.1]\label{thm:sieve}
	For $\tau\in S_k$, let $\sign(\tau)$  be the  {\it signature} of $\tau$ which is defined by   $\sign(\tau)=(-1)^{k-c(\tau)}$, where $c(\tau)$ is the number of disjoint cycles of $\tau$. Then
	$$ |\overline{X}|=\sum_{\tau\in S_k}\sign(\tau)|X_\tau|.$$
In particular, if $X$ is symmetric, that is, invariant under the action of $S_k$, then
\begin{align}\label{eq:main}
|\overline{X}|=\sum_{\tau\in C_k}\sign(\tau)C(\tau)|X_\tau|,
\end{align}
where  $C(\tau)$ is the size of the conjugacy class of $S_k$ that contains $\tau$ and $|X_\tau|$ is naturally defined over $C_k$, the set of all conjugacy classes of $S_k$.
\end{theorem}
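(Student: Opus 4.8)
The starting point is the standard "permanent-like" identity coming from inclusion–exclusion on the partition lattice. Index subsets of $\{1,\dots,k\}$-positions by the set partition they induce: for a set partition $P = \{B_1,\dots,B_r\}$ of $\{1,2,\dots,k\}$, let $X_P = \{x\in X : x_i = x_j \text{ whenever } i,j \text{ lie in the same block of } P\}$. Every $x\in X$ induces a finest partition $P(x)$ recording exactly which coordinates coincide, and $x\in\overline{X}$ iff $P(x)$ is the partition into singletons, which I will call $\hat 0$. The first step is to write, for each fixed partition $Q$, $|X_Q| = \sum_{P \geq Q} N(P)$, where $N(P) = \#\{x\in X : P(x) = P\}$ and $\geq$ is the refinement order (coarser partitions on top). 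By Möbius inversion on the partition lattice $\Pi_k$, $N(\hat 0) = \sum_{P \in \Pi_k} \mu(\hat 0, P)\, |X_P|$, so $|\overline X| = \sum_{P}\mu(\hat 0,P)|X_P|$.

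The second step invokes the classical formula for the Möbius function of $\Pi_k$: if $P$ has blocks of sizes $b_1,\dots,b_r$, then $\mu(\hat 0, P) = \prod_{i=1}^r (-1)^{b_i-1}(b_i-1)!$. Now I need to match this up with the sum over $S_k$. Given a permutation $\tau$ with cycle type $(b_1,\dots,b_r)$, its cycle partition $P_\tau$ (the set partition whose blocks are the supports of the cycles) has block sizes $b_1,\dots,b_r$, and clearly $X_\tau = X_{P_\tau}$ by definition of $X_\tau$. Moreover the number of permutations $\tau$ with a given cycle partition $P$ (with block sizes $b_i$) is exactly $\prod_i (b_i-1)!$, since a $b_i$-element block supports $(b_i-1)!$ distinct $b_i$-cycles. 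Also $\sign(\tau) = (-1)^{k - c(\tau)} = (-1)^{k-r} = \prod_i (-1)^{b_i-1}$ since $\sum_i b_i = k$. Therefore
\begin{align*}
\sum_{\tau\in S_k}\sign(\tau)|X_\tau| = \sum_{P\in\Pi_k}\Bigl(\prod_{i}(b_i-1)!\Bigr)\Bigl(\prod_i(-1)^{b_i-1}\Bigr)|X_P| = \sum_{P\in\Pi_k}\mu(\hat 0,P)\,|X_P| = |\overline X|,
\end{align*}
which is the first assertion.

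For the symmetric case, observe that if $X$ is $S_k$-invariant then $|X_\tau|$ depends only on the conjugacy class of $\tau$: indeed $X_{\sigma\tau\sigma^{-1}} = \sigma\circ X_\tau$, and $|\sigma\circ X_\tau| = |X_\tau|$ since $\sigma$ acts on $X$. Grouping the sum $\sum_{\tau\in S_k}$ by conjugacy classes and writing $C(\tau)$ for the class size then yields \eqref{eq:main} immediately, and the number of classes of $S_k$ is $p(k)$.

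The routine inputs here are the Möbius function of the partition lattice and the count $\prod_i(b_i-1)!$ of permutations with a prescribed cycle partition; neither needs to be reproved from scratch. The one place that deserves care—the main obstacle, such as it is—is the bookkeeping identification $X_\tau = X_{P_\tau}$ together with the assertion that the refinement-order Möbius inversion legitimately produces $\overline X$ as the $\hat 0$-layer: one must check that the finest partition $P(x)$ of coinciding coordinates is well defined for every $x$ (it is, being the join of the pair-partitions forced by each equality $x_i=x_j$), and that $X_P = \bigsqcup_{P'\geq P} \{x : P(x)=P'\}$ is a genuine disjoint decomposition. Once those are in place the rest is the substitution displayed above.
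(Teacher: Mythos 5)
Your proof is correct, and it follows essentially the same route the paper takes: M\"obius inversion on the partition lattice $\Pi_k$, the Sch\"utzenberger--Frucht--Rota formula $\mu(\hat 0,P)=\prod_i(-1)^{b_i-1}(b_i-1)!$, and the observation that a partition with block sizes $b_1,\dots,b_r$ corresponds to exactly $\prod_i(b_i-1)!$ permutations, which is precisely the mechanism the paper sketches in its introduction and carries out in the proofs of Theorems \ref{thm:j} and \ref{thm:MdX} (whose $j=1$ case recovers this statement). The paper itself defers the formal proof of Theorem \ref{thm:sieve} to \cite{LD}, but your argument is a complete and correct instance of the paper's method.
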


 It is quite surprising that Theorem \ref{thm:sieve} was not known before since the M\"{o}bius inversion over $\Pi_k$ was   known in the 1960s, where $\Pi_k$ is the poset of all partitions of $\{1,2,\dots,k\}$ ordered by refinement.  Precisely, the M\"{o}bius inversion formula gives the following formula for $|\overline{X}|$:
\[   |\overline{X}| =\sum_{\tau\in\Pi_k}\mu(\bm{0}, \tau)|X_\tau|.\]
 In the case of partition  lattice $\Pi_k$,  an explicit expression for
 the M\"{o}bius function $\mu(\bm{0},\tau)$ (see Proposition \ref{prop:Mobius function}) was given independently by
  Sch\"{u}tzenberger in 1954, and Frucht and Rota \cite{Rota} in 1964. However, the above formula is not convenient to use. One  reason is that counting problems over set partitions seem more complicated than those over permutations, as explained below.

   We now explain further why counting over permutations might be
simpler. Suppose that a permutation $\tau\in S_k$ is of type $(c_1,c_2,\ldots,c_k)$, that is,  it has exactly $c_i$ cycles of length $i$. It is well-known that the size of the conjugacy class of $S_k$ that contains $\tau$ is given by
	$$C(\tau)=N(c_1,c_2,\dots,c_k)=\frac{k!}{1^{c_1}c_1!2^{c_2}c_2!\cdots k^{c_k}c_k!},$$
since two permutations in $S_k$ are conjugate if and only if they have the same type. This makes \eqref{eq:main} computable for many interesting cases via  the exponential generating function defined by
\begin{align}\label{eq:exgen}\sum_{k=0}^\infty \sum_{\sum ic_i=k} N(c_1,c_2,\dots,c_k)t_1^{c_1}t_2^{c_2}\cdots t_k^{c_k}\frac{u^k}{k!}=\exp\Big(t_1u+t_2\frac{u^2}{2}+t_3\frac{u^3}{3}+\cdots\Big).\end{align}

The readers are referred to  \cite{LD, li2012} for more details and the proof of Theorem \ref{thm:sieve}. It turns out that the sieve formula \eqref{eq:main} has played an important role in many interesting problems in number theory and coding theory.  

  First, the sieve formula gives an elementary way for enumerating subsets $S$ of $\f^*$ with the property that $\sum_{x\in S}x^m=b$,
 which was first studied by Odlyzko and Stanley for  prime $q$ \cite{OS}. We remark that it has the advantage when used to count the number of $k$-subsets $S$ of $\f^*$ satisfying the same equality \cite{li2013, ZW}.

 Second, since a subset can be naturally regarded as a vector with distinct coordinates,  the sieve formula provides a new counting approach for investigating the subset sum problem, a well-known {\bf{\#P}-complete} problem,  from a mathematical point of view. Precisely,  it is possible to explicitly enumerate subsets of a finite subset $D\subseteq G$ that sum to a given element in $G$, where $G$ is an abelian group.  For example, $G$ could be the additive group of a finite field, the multiplicative group of a finite field, the rational group of an elliptic curve over finite fields, etc.,
and $D$ could be a subset with algebraic structure (a subgroup, for example) or an arbitrarily large subset of $G$.
Many explicit or asymptotic
formulas were obtained for different subsets $D$ in these situations;
see for example \cite{Kos,li2008,li2012,li2018,ZW}. Further applications can be found in \cite{li2012-1, li2019, LWZ}.

In this paper,  we extend the Li--Wan sieve to multisets when $X$ is symmetric. This extension allows us to count  more complicated combinatorial objects naturally, as shown in Section \ref{sec:application}.

\subsection{Motivations for restricted multiset counting}  
We first give the definition of restricted multiset.
\begin{definition}
Let  $D^k$ be the Cartesian product of $k$ copies of a set $D$. A subset $X$ of  $D^k$ is said to be {\it symmetric}  if $(x_{\tau(1)},x_{\tau(2)},\dots,x_{\tau(k)})\in X$ for any $(x_1,x_2,\dots,x_k)\in X$ and any $\tau\in S_k$. From now on, we always assume that $X$ is symmetric. A $k$-multiset $[x_1,x_2,\ldots,x_k]$  is said to satisfy the {\it restriction} $X$ if the ordered $k$-tuple $(x_1,x_2,\ldots,x_k)$ is in $X$. We denote by $\mathcal{M}(X)$ the set of all $k$-multisets satisfying the restriction $X$, that is,
$$\mathcal{M}(X):=\{ [x_1,x_2,\ldots,x_k]: (x_1,x_2,\ldots,x_k)\in X\}.$$
\end{definition}

\begin{example}
Let $D=\mathbb{F}_q$ be a finite field of $q$ elements, and let $X$ be the set $X=\{(x_1,x_2,\dots,x_k)\in D^k:x_1+x_2+\cdots+x_k=0\}$. Then $\mathcal{M}(X)$ consists of $k$-multisets over $\F_q$ whose elements sum to $0$.
\end{example}

\begin{remark}
 It is not hard to check that  the restriction $X$ is well-defined, since $X$ is a symmetric subset of $D^k$. One can also think of the set of  $k$-multisets satisfying the restriction $X$ as the image of $X$ under the map that sends the ordered $k$-tuple $(x_1,x_2,\ldots,x_k)$ to the $k$-multiset $[x_1,x_2,\ldots,x_k]$.
\end{remark}

The problem of counting restricted multisets arises naturally from combinatorics. Some interesting problems are listed as follows.
\subsubsection{Polynomials with prescribed range}
In  studying permutations, hyperplanes and polynomials over finite fields, G\'{a}cs et al. proposed the following conjecture on polynomials with prescribed range.
\begin{conjecture}[\cite{GHNP}, Conjecture 5.1]\label{conj:p}
	Suppose that $M=[a_1,a_2,\dots,a_q]$ is a multiset over a finite field $\F_q$ with $a_1+a_2+\cdots+a_q=0$, where $q=p^h$ with $p$ prime. Let $m<\sqrt{p}$. If there is no polynomial with range $M$ of degree less than $q-m$, then $M$ contains an element of multiplicity at least $q-m$.
\end{conjecture}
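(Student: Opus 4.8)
To prove Conjecture~\ref{conj:p}, the plan is to establish the contrapositive: assuming no value of $\F_q$ occurs in $M$ with multiplicity $\ge q-m$, exhibit a polynomial of degree $\le q-1-m$ with range $M$, by proving that the number of such polynomials is positive via a character-sum estimate fed into the Li--Wan sieve. \textbf{Reduction and character expansion.} Fix enumerations $M=[M_1,\dots,M_q]$ and $\F_q=\{\alpha_1,\dots,\alpha_q\}$, and for $\sigma\in S_q$ let $f_\sigma$ be the polynomial of degree $<q$ with $f_\sigma(\alpha_{\sigma(i)})=M_i$ for all $i$. A short Lagrange-interpolation computation (using $\prod_k(x-\alpha_k)=x^q-x$) shows that the coefficient of $x^{\,q-1-l}$ in $f_\sigma$ equals $-\sum_{i=1}^q M_i\alpha_{\sigma(i)}^{\,l}$ for $0\le l\le q-2$, so $\deg f_\sigma\le q-1-m$ iff $\sum_{i=1}^q M_i\alpha_{\sigma(i)}^{\,l}=0$ for $l=1,\dots,m-1$ (the case $l=0$ is the hypothesis $\sum_i M_i=0$). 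Every polynomial with range $M$ equals some $f_\sigma$, and $f_\sigma=f_{\sigma'}$ precisely when $\sigma,\sigma'$ differ by a permutation of the equal-valued entries of $M$; hence the integer
\[
N:=\#\Big\{\sigma\in S_q:\ \textstyle\sum_{i=1}^q M_i\,\alpha_{\sigma(i)}^{\,l}=0\ \text{ for all }1\le l\le m-1\Big\}
\]
is a positive multiple of the number of degree-$\le q-1-m$ polynomials with range $M$, so it suffices to prove $N>0$. Using a nontrivial additive character $\psi$ of $\F_q$ to detect the $m-1$ linear conditions,
\[
N=\frac{1}{q^{\,m-1}}\sum_{\mathbf t\in\F_q^{\,m-1}}S(\mathbf t),\qquad
S(\mathbf t)=\sum_{\sigma\in S_q}\psi\!\Big(\sum_{i=1}^q M_i\,g_{\mathbf t}\big(\alpha_{\sigma(i)}\big)\Big),\qquad g_{\mathbf t}(x)=\sum_{l=1}^{m-1}t_l x^{\,l},
\]
with $S(\mathbf 0)=q!$, so it is enough to prove $\sum_{\mathbf t\ne\mathbf 0}|S(\mathbf t)|<q!$.

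\textbf{The sieve.} As $\sigma$ runs over $S_q$, the vector $\mathbf y=(\alpha_{\sigma(1)},\dots,\alpha_{\sigma(q)})$ runs over $\overline X$ in the sense of \eqref{def:overlineX} with $X=\F_q^q$, and $S(\mathbf t)=\sum_{\mathbf y\in\overline X}\psi\big(\sum_i M_i g_{\mathbf t}(y_i)\big)$. The weighted form of Theorem~\ref{thm:sieve}, immediate from the pointwise identity $\sum_{\tau\in S_q}\sign(\tau)\,\mathbf 1_{X_\tau}(\mathbf y)=\mathbf 1_{\overline X}(\mathbf y)$ underlying its proof, gives $S(\mathbf t)=\sum_{\tau\in S_q}\sign(\tau)\sum_{\mathbf y\in X_\tau}\psi\big(\sum_i M_i g_{\mathbf t}(y_i)\big)$. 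On $X_\tau$ the entries of $\mathbf y$ are constant along the cycles of $\tau$; a cycle with support $B\subseteq[q]$ carries a common value $u\in\F_q$ and contributes $\psi\big(\nu(B)\,g_{\mathbf t}(u)\big)$, where $\nu(B):=\sum_{i\in B}M_i$. Summing $u$ over $\F_q$ and re-indexing the cycles of $\tau$ by the partition $\mathcal P$ of $[q]$ they induce (with $\mu(\hat 0,\mathcal P)=\prod_{B\in\mathcal P}(-1)^{|B|-1}(|B|-1)!$ by Proposition~\ref{prop:Mobius function}),
\[
S(\mathbf t)=\sum_{\mathcal P\,\vdash\,[q]}\ \prod_{B\in\mathcal P}\Big[(-1)^{|B|-1}(|B|-1)!\ \sum_{u\in\F_q}\psi\big(\nu(B)\,g_{\mathbf t}(u)\big)\Big].
\]

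\textbf{Block estimates and assembly.} Fix $\mathbf t\ne\mathbf 0$, so $1\le\deg g_{\mathbf t}\le m-1<\sqrt p<p$. For a block with $\nu(B)=0$ the inner sum is $q$; for a block with $\nu(B)\ne 0$ the polynomial $\nu(B)g_{\mathbf t}$ has degree not divisible by $p$, so Weil's bound gives $\big|\sum_u\psi(\nu(B)g_{\mathbf t}(u))\big|\le (m-2)\sqrt q$. Thus a block meeting $M$ in a sub-multiset of nonzero sum is damped to size $O(\sqrt q)$, while zero-sum blocks keep size $q$ (weighted by $(-1)^{|B|-1}(|B|-1)!$); only partitions slicing $M$ into many zero-sum sub-multisets can keep $|S(\mathbf t)|$ near $q!$. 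Among those there is still heavy cancellation from the M\"obius signs — already when every block contributes $q$, the signed sum collapses from individually huge terms to exactly $q!$ — and the multiplicity hypothesis is what should force the residual sum below $q!\,q^{-(m-1)}$: since no value occurs $\ge q-m$ times, one cannot assemble a partition from that many singleton copies of $0$, nor pack the copies of a fixed nonzero $v$ into more than about $q/p$ zero-sum blocks, so the ``cheap'' (size-$q$) blocks are genuinely outnumbered by Weil-damped ones. To carry out the sum over $\mathcal P$ one replaces the single-variable generating function \eqref{eq:exgen} by its refinement for set partitions weighted through the block-sums $\nu(B)$ — the multiset version of the sieve — and extracts the relevant coefficient; the target is $|S(\mathbf t)|\le q!\,q^{-(m-1)}q^{-\delta}$ for some $\delta>0$ once $m<\sqrt p$, which gives $\sum_{\mathbf t\ne\mathbf 0}|S(\mathbf t)|<(q^{m-1}-1)\,q!\,q^{-(m-1)}q^{-\delta}<q!$ and hence $N>0$.

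\textbf{The main obstacle.} Everything up to the last step is routine; the crux is the signed sum over $\mathcal P$, delicate for two reasons. First, each block weight depends on the multiset-sum $\nu(B)$ rather than on $|B|$ alone, so the exponential generating function \eqref{eq:exgen} no longer closes and the cancellation must be re-derived in the multiset-refined setting — this is exactly the new sieve the paper provides, and squeezing a usable coefficient bound out of it is the main technical content. Second, the multiplicity hypothesis must be made genuinely quantitative: individual zero-sum partitions can each exceed $q!$ in size, so one has to show that, \emph{after} the M\"obius cancellation, the bound $\mu_{\max}\le q-m-1$ is precisely enough to hold the total below $q!\,q^{-(m-1)}$. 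The threshold $m<\sqrt p$ is where the $\sqrt q$-savings from Weil per non-zero-sum block overcome both the $q^{m-1}$ characters and the worst case permitted by the multiplicity bound; proving that the balance is tight enough to reach this full range — rather than only, say, $m=o(q^{1/4})$ — is where the real difficulty lies.
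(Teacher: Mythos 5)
This statement is not something the paper proves: it is Conjecture 5.1 of G\'{a}cs, H\'{e}ger, Nagy and P\'{a}lv\"{o}lgyi, quoted purely as motivation, and the paper explicitly records its status --- it holds for $m=2$ (by Theorem 2.2 of that reference) but was \emph{disproved} for $m\geq 3$ by Muratovi\'{c}-Ribi\'{c} and Wang \cite{MW2012}, precisely by estimating $|\mathcal{M}(X)|$ for $X$ the set of zero-sum tuples over $\F_q\setminus\{0\}$. So no correct proof of the statement as written can exist, and your argument must break somewhere. The break is at the final step: you need $\sum_{\mathbf t\ne\mathbf 0}|S(\mathbf t)|<q!$, equivalently $N>0$, for \emph{every} zero-sum multiset $M$ all of whose multiplicities are at most $q-m-1$; the counterexamples of Muratovi\'{c}-Ribi\'{c} and Wang are exactly multisets of this kind with $N=0$. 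Roughly, their disproof is a pigeonhole count: the number of admissible multisets $M$ (obtained by restricted-multiset counting of the type this paper systematizes) exceeds the number $q^{q-m}$ of polynomials of degree less than $q-m$ once $m\geq 3$, so most such $M$ are not ranges of low-degree polynomials at all. Your hoped-for bound $|S(\mathbf t)|\leq q!\,q^{-(m-1)}q^{-\delta}$ is therefore false in general: the partitions of $[q]$ into many zero-sum blocks, which you correctly identify as the dangerous terms, genuinely dominate for suitable $M$ and are not killed by the M\"obius signs.

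Even setting aside the falsity of the target, what you have written is a strategy rather than a proof: the decisive estimate --- the signed sum over partitions weighted through the block sums $\nu(B)$ --- is flagged as ``the main obstacle'' and ``where the real difficulty lies'' rather than carried out, so the argument would be incomplete even if the conjecture were true. The salvageable part of your setup (Lagrange interpolation reducing degree conditions to power-sum conditions, character detection, and the sieve over $S_q$) is essentially the machinery used in the literature the paper cites, but there it leads to asymptotic counts and, for $m\geq 3$, to a refutation rather than a proof.
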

Here a multiset $M$ on $\F_q$ is said to be the range of the polynomial $f\in \F_q[x]$ if $M=[f(x):x\in\F_q]$ as a multiset (that is, not only values, but also multiplicities need to be the same).  To prove or disprove the conjecture by a counting argument, a key step is to estimate the cardinality of the set $\mathcal{M}(X)$, where $X$ is the set of ordered $k$-tuples in $(\mathbb{F}_q\setminus\{0\})^k$ that sum to zero. In the case $m=2$, the conjecture holds by Theorem 2.2 in \cite{GHNP}; in the case $m\geq 3$,  the conjecture was disproved by Muratovi\'{c}-Ribi\'{c} and Wang \cite{MW2012} using  an estimation they obtained for $|\mathcal{M}(X)|$.
\subsubsection{Bijection between necklaces and zero-sum multisets}
We may further partition $\mathcal{M}(X)$ into various classes.
  For instance,   consider the set of $k$-multisets in $\mathcal{M}(X)$ with the multiplicity of each element no greater than a given number. For an integer $j\geq 1$,  we define $$ \mathcal{M}_{j}(X)=\{[x_1,x_2,\dots,x_k]\in \mathcal{M}(X):\text{the multiplicity of each}\ x_i \text{\ is no greater than\ } j,\ 1\leq i\leq k\},$$
The set $\mathcal{M}_j(X)$ arises from the bijective proof problem of necklaces and zero-sum multisets \cite{Chan}, which asks for a bijection between cyclic necklaces of length $n$ with at most $q$-colors and zero-sum multisets over $\Z/n\Z$ with the multiplicity of each element strictly less than $q$, when $n$ and $q$ are coprime. Note that the latter is the set $\bigcup_{k=0}^{n(q-1)}\mathcal{M}_{q-1}(X_k)$, where $X_k$ is the set of ordered $k$-tuples in $(\Z/n\Z)^k$ that sum to zero.
Recently, Chan \cite{Chan} gave a surprising bijective construction for this problem  when $q$ is a prime power using tools from finite fields. Specializing to the case $q=2$ answers a question raised by Stanley (see \cite{stanley}, Page 136), which was open for many years. The problem  remains open when $q$ is not a prime power. We believe that our results on restricted multisets might give some new insights into this problem.
%
\subsubsection{List sizes of Reed Solomon codes}

Let  $\f$ be  a finite field of order $q$. Let $1\leq n\leq q$ be a positive integer and $D=\{x_1,x_2,\dots, x_n\} \subset \f$ be a subset of
cardinality $|D|=n>0$. For $1\leq k\leq n$, the Reed-Solomon code
$\mathcal{RS}_{n,k}$ consists of all  vectors of the form  
$$(f(x_1),f(x_2),\dots, f(x_n))\in \f^n,$$
where $f$ runs over all polynomials in $\f[x]$ of degree at
most $k-1$. Reed-Solomon codes play important roles in coding theory.
It is well-known that the minimum distance of the Reed-Solomon code is $n-k+1$. For simplicity we consider the special case $D=\f$ and the corresponding code $\mathcal{RS}_{q,k}$  is called the standard Reed-Solomon code.

  Given a received word  $u$, it is a challenging problem to determine the distance distribution having $u$ as the center. In particular, it
 is an important open problem to obtain list sizes beyond the Jonson bound.
That is, for a non-negative integer $i$, compute the number $N_i(u)$ of codewords in $\mathcal{RS}_{q,k}$ whose distance to $u$ is exactly $i$.
In \cite{li2020}, the authors reduce a list size decoding problem of Reed Solomon codes to a multiset counting problem. For interested readers  we restated it as follows.
\begin{problem} Let $1\leq k \leq q$ and $-k \leq m\leq q-k-1$.
Given a monic polynomial $f(x)\in \f [x]$ of degree $k+m$ and an integer $0\leq r\leq k+m$, count $N(f(x), r)$,  the number  of polynomials $g(x)\in\f[x]$  with  $\deg g(x) \leq k-1$ such that $f(x)+g(x)$ has exactly $r$ distinct roots in $\f$.
\end{problem}

This leads to another refinement of $\mathcal{M}(X)$ and a generalization of the set $\overline{X}$ defined in \eqref{def:overlineX}. It is  the set of $k$-multisets in $\mathcal{M}(X)$ which have exactly $d$ distinct elements, that is,
$$
\overline{\mathcal{M}}_d(X)=\{ [x_1,x_2,\dots,x_k]\in \mathcal{M}(X): [x_1,x_2,\dots,x_k]\ \text{has} \ \text{exactly}\ d\ \text{distinct elements}\}.
$$
For $a\in D$, let $P_a$ be the property that a multiset contains $a$ as an element, and for $A\subseteq D$,  let $N_A$  be the number of $k$-multisets satisfying the restriction $X$ and the property $P_a$ for each $a\in A$.
Then the weighted version of the inclusion-exclusion principle \cite{VW} gives
\begin{align*}
|\overline{\mathcal{M}}_d(X)|&=\sum_{\{a_1, a_2, \dots, a_{d}\}\subset D}N_{\{a_1, a_2, \dots, a_{d}\}}-{d+1 \choose d} \sum_{\{a_1, a_2, \dots, a_{d+1}\}\subset D}N_{\{a_1, a_2, \dots, a_{d+1}\}}+\cdots.
 \end{align*}
However, $N_{\{a_1, a_2, \dots, a_{i}\}}$ is usually depending on
 $\{a_1, a_2, \dots, a_{i}\}$ and thus it seems infeasible to
use this formula to obtain a nice bound on $|\overline{\mathcal{M}}_d(X)|$.


Clearly, the restricted multiset sum problem is a natural generalization of the subset sum problem and thus is \textbf{\#P-complete}. In this paper,
we try to study the counting version of this problem. We establish
several combinatorial identities, which in some cases give interesting closed-form expressions.

\subsection{Main results}

Our idea for computing $|\mathcal{M}(X)|$, $\mathcal{M}_{j}(X)$ and $|\overline{\mathcal{M}}_d(X)|$ is based on the M\"{o}bius inversion formula on $\Pi_k$, the poset of all partitions of $\{1,2,\dots,k\}$ ordered by refinement. The method first appeared in \cite{li2012}.
Given a permutation $\tau\in S_k$,  suppose again that we have a disjoint cycle factorization $\tau=\tau_1\tau_2\cdots \tau_{c(\tau)}$ and the length of the cycle $\tau_i$ is $\ell_i$, $1\leq i\leq c(\tau)$.  For an integer $j\geq 1$ we define
\begin{equation}\label{eq:weightj}
w_{j}(\tau):=(1-(j+1)1_{(j+1)\mid \ell_1})(1-(j+1)1_{(j+1)\mid \ell_2})\cdots (1-(j+1)1_{(j+1)\mid \ell_{c(\tau)}}),
\end{equation}
where $1_{(j+1)\mid\ell_i}$ denotes the indicator function of the statement $(j+1)\mid \ell_i$ which is equal to $1$ if $(j+1)\mid \ell_i$ and $0$ otherwise. Let $d$ be an integer with $1\leq d\leq k$, we define
\begin{equation}\label{eq:weight1}
\overline{w}_d(\tau):=[x^d] (1-(1-x)^{\ell_1})(1-(1-x)^{\ell_2})\cdots (1-(1-x)^{\ell_{c(\tau)}}). 
\end{equation}

Now we can state our main results. Recall that   $X$ is a symmetric subset of  $D^k$ and  $\mathcal{M}(X)$ is defined as $\mathcal{M}(X)=\{ [x_1,x_2,\ldots,x_k]: (x_1,x_2,\ldots,x_k)\in X\}$.
\begin{theorem}\label{thm:j}
	Let $j$ be a positive integer  and let $\mathcal{M}_{j}(X)$ be the set of $k$-multisets in $\mathcal{M}(X)$ with the multiplicity of each element no greater than $j$. Then  we have
	\begin{equation}\label{eq:Mj}
	|\mathcal{M}_{j}(X)| = \frac{1}{k!}\sum_{\tau\in S_k} w_{j}(\tau) |X_\tau|,
	\end{equation}
	In particular, specializing to $j\geq k$, we have
	\begin{equation}\label{eq:MX}
	|\mathcal{M}(X)| = \frac{1}{k!}\sum_{\tau\in S_k}  |X_\tau|.
	\end{equation}
\end{theorem}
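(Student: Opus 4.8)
The plan is to reorganise a double sum over $S_k$ inside the partition lattice $\Pi_k$ and then collapse the resulting per-permutation sum via the exponential generating function \eqref{eq:exgen}. For $x=(x_1,\dots,x_k)\in D^k$ write $\sigma(x)\in\Pi_k$ for its \emph{equality partition}, whose blocks are the fibres of $i\mapsto x_i$, and for $\tau\in S_k$ write $\pi(\tau)\in\Pi_k$ for the partition into the cycles of $\tau$. By the definition of $X_\tau$, a tuple $x$ lies in $X_\tau$ exactly when it is constant on every cycle of $\tau$, i.e.\ when $\pi(\tau)$ refines $\sigma(x)$, so $|X_\tau|=\#\{x\in X:\pi(\tau)\le\sigma(x)\}$.

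First I would express $|\mathcal{M}_j(X)|$ as a weighted sum over $X$. Because $X$ is symmetric, a $k$-multiset lies in $\mathcal{M}_j(X)$ iff it has an ordering $x\in X$ whose equality partition $\sigma(x)$ has all blocks of size at most $j$, and then all $k!/\prod_{B\in\sigma(x)}|B|!$ orderings of that multiset lie in $X$. Grouping the tuples $x\in X$ whose equality partition has all blocks of size $\le j$ into fibres over $\mathcal{M}_j(X)$ and weighting each such $x$ by $\prod_{B\in\sigma(x)}|B|!$, every fibre contributes exactly $k!$, so
\[
\sum_{x\in X}\Bigl(\prod_{B\in\sigma(x)}|B|!\Bigr)\,\mathbf{1}\bigl[\text{every block of }\sigma(x)\text{ has size}\le j\bigr]=k!\,|\mathcal{M}_j(X)|.
\]
On the other hand, swapping the order of summation in the right-hand side of \eqref{eq:Mj} gives $\sum_{\tau\in S_k}w_j(\tau)|X_\tau|=\sum_{x\in X}\sum_{\tau\in S_k,\ \pi(\tau)\le\sigma(x)}w_j(\tau)$, so it suffices to prove, for every $\sigma\in\Pi_k$ with blocks $B_1,\dots,B_c$, the identity
\[
\sum_{\tau\in S_k,\ \pi(\tau)\le\sigma}w_j(\tau)=\Bigl(\prod_{i=1}^{c}|B_i|!\Bigr)\,\mathbf{1}\bigl[|B_i|\le j\ \text{for all }i\bigr].
\]

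Next I would factor the inner sum over the blocks of $\sigma$. A permutation $\tau$ satisfies $\pi(\tau)\le\sigma$ exactly when it stabilises each $B_i$ setwise, so $\tau\mapsto(\tau|_{B_1},\dots,\tau|_{B_c})$ is a bijection $\{\tau:\pi(\tau)\le\sigma\}\xrightarrow{\sim}S_{B_1}\times\cdots\times S_{B_c}$; each cycle of such a $\tau$ lies in one $B_i$, and $w_j$ is a product over cycles, so $w_j(\tau)=\prod_{i=1}^{c}w_j(\tau|_{B_i})$. Hence the sum above equals $\prod_{i=1}^{c}\bigl(\sum_{\rho\in S_{|B_i|}}w_j(\rho)\bigr)$, and everything reduces to the one-variable identity
\[
\sum_{\rho\in S_n}w_j(\rho)=\begin{cases}\,n!,&1\le n\le j,\\[1pt]\,0,&n>j.\end{cases}
\]
Granting this, the $B_i$-factor is $|B_i|!$ when $|B_i|\le j$ and $0$ otherwise, which is precisely the claimed product; and for \eqref{eq:MX}, when $j\ge k$ no cycle length occurring in $S_k$ is divisible by $j+1$, so $w_j(\tau)\equiv1$ and moreover $\mathcal{M}_j(X)=\mathcal{M}(X)$.

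Finally, the one-variable identity comes out of \eqref{eq:exgen} on setting $t_\ell=1-(j+1)\mathbf{1}[(j+1)\mid\ell]$: since $w_j$ is a product over cycles, the left side of \eqref{eq:exgen} becomes $\sum_{n\ge0}(u^n/n!)\sum_{\rho\in S_n}w_j(\rho)$, while the right side becomes
\[
\exp\Bigl(\sum_{\ell\ge1}\tfrac{u^\ell}{\ell}-(j+1)\!\!\sum_{(j+1)\mid\ell}\!\!\tfrac{u^\ell}{\ell}\Bigr)=\exp\bigl(-\log(1-u)+\log(1-u^{j+1})\bigr)=\frac{1-u^{j+1}}{1-u}=1+u+\cdots+u^{j},
\]
and comparing the coefficient of $u^n$ yields the identity. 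The step demanding the most care is the block-factorisation: one must check that $\{\tau:\pi(\tau)\le\sigma\}$ is exactly the Young subgroup $\prod_i S_{B_i}$ and that the weight $w_j$ --- indexed by the cycles of $\tau$ --- really is multiplicative across the blocks of $\sigma$; once that is in place the rest is formal. (Alternatively one could run the first step through the M\"obius function $\mu(\hat{0},\cdot)$ of $\Pi_k$, but the direct reorganisation avoids inverting anything.)
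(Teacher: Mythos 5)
Your proof is correct, and it takes a genuinely different route from the paper's. The paper works through the partition lattice $\Pi_k$: it introduces the sets $X^\circ_\tau$ of tuples whose equality pattern is exactly $\tau$, applies M\"obius inversion together with the explicit Frucht--Rota--Sch\"utzenberger formula \eqref{eq:mobius} for $\mu(\tau,\sigma)$, and then needs a separate generating-function computation (Lemma \ref{lem:1}, built on $\log(1+x+\cdots+x^j)$) to recognize the resulting M\"obius-weighted sum as $w_j(\sigma)$ times a count of permutations. You run the argument in the opposite direction and never invert anything: you write both sides of \eqref{eq:Mj} as sums over $x\in X$, reduce to the identity $\sum_{\pi(\tau)\le\sigma}w_j(\tau)=\prod_i|B_i|!\cdot\mathbf{1}[|B_i|\le j\ \forall i]$, factor over the Young subgroup $S_{B_1}\times\cdots\times S_{B_c}$ (your identification of $\{\tau:\pi(\tau)\le\sigma\}$ with this subgroup and the multiplicativity of $w_j$ across blocks are both correct), and finish with the single evaluation $\sum_{n\ge0}\frac{u^n}{n!}\sum_{\rho\in S_n}w_j(\rho)=\frac{1-u^{j+1}}{1-u}$, which is exactly the specialization of \eqref{eq:exgen} the paper already sets up. Your route is more self-contained --- it needs neither Proposition \ref{prop:Mobius} nor Proposition \ref{prop:Mobius function} --- and the double-counting step $\sum_{x}\prod_B|B|!\,\mathbf{1}[\cdots]=k!\,|\mathcal{M}_j(X)|$ is cleanly justified by the symmetry of $X$. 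What the paper's heavier machinery buys is reuse: the same $X^\circ_\tau$/M\"obius framework carries over verbatim to Theorem \ref{thm:MdX} with only the final lemma changed, whereas your argument would need the analogous (and equally doable) identity $\sum_{\pi(\tau)\le\sigma}\overline{w}_d(\tau)=\prod_i|B_i|!\cdot\mathbf{1}[\sigma\text{ has exactly }d\text{ blocks}]$ to cover that case.
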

\begin{remark}
 The formula \eqref{eq:Mj} can be further simplified by employing the symmetry of $X$. We notice that $X_\tau$ has the same cardinality for $\tau$ in a conjugacy class of $S_k$, since $X$ is symmetric. This leads to the simplification of \eqref{eq:Mj} as
	\begin{align}\label{eq:Mjc}
	|\mathcal{M}_{j}(X)|=\frac{1}{k!}\sum_{\tau\in C_k}w_{j}(\tau)C(\tau)|X_\tau|,
	\end{align}
	where $C_k$ and $C_\tau$ are defined as in Theorem \ref{thm:sieve}. We prefer \eqref{eq:Mj} as it looks cleaner.
	
	 Specializing to $j=1$,  we see from \eqref{eq:weightj} that $$w_1(\tau)=(1-2\cdot 1_{2\mid \ell_1})(1-2\cdot 1_{2\mid \ell_1})\cdots (1-2\cdot 1_{2\mid \ell_{c(\tau)}})=(-1)^{k-c(\tau)},$$
	where we used $1-2\cdot 1_{2\mid \ell_i}=(-1)^{\ell_i-1}$ and $\ell_1+\ell_2+\cdots+\ell_{c(\tau)}=k$. Thus when $j=1$, the sieve formula \eqref{eq:Mj} is indeed the Li--Wan sieve (Theorem \ref{thm:sieve}).
\end{remark}

\begin{theorem}\label{thm:MdX}
Let $d$ be a positive integer and let $\overline{\mathcal{M}}_d(X)$ be the set of $k$-multisets in $\mathcal{M}(X)$ which have exactly $d$ distinct elements. Then
	\begin{equation}\label{eq:MdX}
	|\overline{\mathcal{M}}_d(X)|=\frac{1}{k!}\sum_{\tau\in S_k}\overline{w}_d(\tau)|X_\tau|.
	\end{equation}
\end{theorem}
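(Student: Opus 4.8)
The plan is to unpack the polynomial weight $\overline{w}_d(\tau)$ combinatorially, interchange the order of summation on the right-hand side of \eqref{eq:MdX}, and collapse the resulting signed sum by a cycle merging/splitting involution. Write the disjoint cycles of $\tau\in S_k$ as subsets $C_1,\dots,C_{c(\tau)}$ of $[k]$, so $|C_i|=\ell_i$. Expanding $(1-x)^{\ell_i}=\sum_{S\subseteq C_i}(-x)^{|S|}$ gives $1-(1-x)^{\ell_i}=\sum_{\emptyset\ne S\subseteq C_i}(-1)^{|S|-1}x^{|S|}$, so multiplying over $i$ and reading off $[x^d]$ yields
\[
\overline{w}_d(\tau)=\sum_{\substack{T\subseteq[k],\ |T|=d\\ T\cap C_i\ne\emptyset\ \text{for all }i}}(-1)^{\,d-c(\tau)},
\]
a sum over size-$d$ subsets $T$ meeting every cycle of $\tau$. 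Substituting this and $|X_\tau|=\sum_{x\in X_\tau}1$, and then interchanging the sums over $\tau$ and over $x$, the right-hand side of \eqref{eq:MdX} becomes
\[
\frac1{k!}\sum_{x\in X}\ \ \sum_{\substack{\tau\in S_k\\ x\in X_\tau}}\ \ \sum_{\substack{T\subseteq[k],\ |T|=d\\ T\cap C_i(\tau)\ne\emptyset\ \forall i}}(-1)^{\,d-c(\tau)}.
\]

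Now fix $x\in X$ and let $\pi(x)$ be the partition of $[k]$ into the fibres $B_1,\dots,B_r$ of $i\mapsto x_i$, so $r$ is the number of distinct entries of $x$ and the block sizes $b_s:=|B_s|$ are the multiplicities of the corresponding values. The condition $x\in X_\tau$ holds exactly when every cycle of $\tau$ lies inside a single block of $\pi(x)$, i.e.\ $\tau$ preserves each $B_s$; writing $\tau|_{B_s}=\sigma_s\in S_{B_s}$ and $T_s=T\cap B_s$, the set $T$ meets every cycle of $\tau$ iff each $T_s$ meets every cycle of $\sigma_s$, and $(-1)^{d-c(\tau)}=\prod_s(-1)^{|T_s|-c(\sigma_s)}$. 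Hence the inner double sum factors through the blocks:
\[
\sum_{d_1+\cdots+d_r=d}\ \prod_{s=1}^{r}f(b_s,d_s),\qquad
f(b,e):=\sum_{\sigma\in S_b}\ \ \sum_{\substack{U\subseteq[b],\ |U|=e\\ U\text{ meets every cycle of }\sigma}}(-1)^{\,e-c(\sigma)}.
\]

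The crux, and the step I expect to be the main obstacle, is the identity $f(b,1)=b!$ and $f(b,e)=0$ for every $e\ge2$. For $e=1$ the constraint forces $\sigma$ to be a single $b$-cycle, so $f(b,1)=b\cdot(b-1)!=b!$. For $e\ge2$, fix $U$ and pick two distinct elements $a,a'\in U$; the map $\sigma\mapsto\sigma\,(a\,a')$ on the set of $\sigma\in S_b$ all of whose cycles meet $U$ merges the cycle of $a$ with the cycle of $a'$ when these are distinct and splits their common cycle when they coincide, and in either case every resulting cycle still meets $U$ (it contains $a$ or $a'$). This is a fixed-point-free involution changing $c(\sigma)$ by exactly one, so the signed sum over such $\sigma$ vanishes for each $U$, giving $f(b,e)=0$. (Equivalently, by the exponential formula the bivariate generating function $\sum_b\frac{u^b}{b!}\sum_{\sigma,U}(-1)^{c(\sigma)}z^{|U|}$, the inner sum over $\sigma\in S_b$ and $U$ meeting every cycle of $\sigma$, equals $\exp\!\bigl(\log\tfrac{1-u(1+z)}{1-u}\bigr)=1-\tfrac{uz}{1-u}$, which is affine in $z$.)

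Granting this, $\prod_s f(b_s,d_s)$ vanishes unless $d_s=1$ for all $s$, which is possible only when $d=r$ and then contributes $\prod_s b_s!$. Thus the right-hand side of \eqref{eq:MdX} equals
\[
\frac1{k!}\sum_{\substack{x\in X\\ x\ \text{has exactly}\ d\ \text{distinct entries}}}\ \prod_{s=1}^{r}b_s!.
\]
To finish I would group these tuples by the multiset $M=[x_1,\dots,x_k]$ they represent: if $M\in\overline{\mathcal{M}}_d(X)$ has distinct values of multiplicities $m_1,\dots,m_d$, then by symmetry of $X$ all $k!/(m_1!\cdots m_d!)$ orderings of $M$ lie in $X$, and each contributes $\prod_s b_s!=m_1!\cdots m_d!$, so $M$ contributes exactly $k!$. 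Summing over $M$ gives $|\overline{\mathcal{M}}_d(X)|$, which is \eqref{eq:MdX}. Beyond the vanishing of $f(b,e)$, the proof is bookkeeping about $\pi(x)$ together with the orbit--stabiliser count hidden in the last step; the same scheme, with a different per-block sum, also underlies Theorem~\ref{thm:j}.
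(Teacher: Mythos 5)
Your proof is correct, but it takes a genuinely different route from the paper. The paper works on the partition lattice $\Pi_k$: it introduces the sets $X_\tau^\circ$ indexed by partitions, applies M\"{o}bius inversion (Proposition \ref{prop:Mobius}) together with the explicit Sch\"{u}tzenberger--Frucht--Rota formula \eqref{eq:mobius} for $\mu(\tau,\sigma)$, factors the resulting sum over the blocks of $\sigma$, and evaluates the single-block case by the generating-function identity of Lemma \ref{lem:2}. You instead expand $\overline{w}_d(\tau)$ as a signed sum over $d$-subsets $T$ meeting every cycle, interchange summation, factor over the fibres of $x$, and collapse the per-block sum $f(b,e)$ with a cycle merge/split involution $\sigma\mapsto\sigma(a\,a')$; no M\"{o}bius function of $\Pi_k$ is needed anywhere. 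All steps check out: the expansion of $[x^d]\prod_i(1-(1-x)^{\ell_i})$, the equivalence of $x\in X_\tau$ with $\tau$ preserving each fibre, the involution (it is fixed-point-free, stays inside the set of $\sigma$ whose cycles all meet $U$, and flips $(-1)^{c(\sigma)}$), and the final orbit count $\frac{k!}{m_1!\cdots m_d!}\cdot m_1!\cdots m_d!=k!$ are all sound; the only point worth stating explicitly is that $f(b,0)=0$, so compositions of $d$ into $r$ parts with a zero part also drop out and the product indeed forces $d=r$. What each approach buys: yours is self-contained and elementary, makes the cancellation mechanism transparent, and, since everything is done pointwise in $x$, yields the weighted version (Theorem \ref{thm:weighted}) with no extra work; the paper's approach is uniform across Theorems \ref{thm:j} and \ref{thm:MdX} (only the final one-block lemma changes) and adapts mechanically to other weights $w(\tau)$ that factor over cycles, at the cost of importing the nontrivial M\"{o}bius function of the partition lattice. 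Your scheme would also prove Theorem \ref{thm:j} by replacing $f(b,e)$ with the analogous per-block sum, as you note.
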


Theorem \ref{thm:j} and Theorem \ref{thm:MdX} have  natural weighted versions.
 \begin{theorem}\label{thm:weightedj}
 	Let $f:X\to \mathbb{C}$ be a  symmetric function (``symmetric" means $f(x_{\tau(1)},x_{\tau(2)},\ldots,x_{\tau(k)})=f(x_1,x_2,\ldots,x_k)$ for any $(x_1,x_2,\dots,x_k)\in X$ and any $\tau\in S_k$). Then we have
 	\begin{equation*}
 	\sum_{[x_1,x_2,\dots,x_k]\in\mathcal{M}_{j}(X)}f(x_1,x_2,\dots,x_k)= \frac{1}{k!}\sum_{\tau\in S_k} w_{j}(\tau)\sum_{x\in X_\tau}f(x_1,x_2,\dots,x_k).
 	\end{equation*}	
 		In particular, specializing to $j\geq k$, we have
 	\begin{equation*}
 	\sum_{[x_1,x_2,\dots,x_k]\in\mathcal{M}(X)}f(x_1,x_2,\dots,x_k)= \frac{1}{k!}\sum_{\tau\in S_k} \sum_{x\in X_\tau}f(x_1,x_2,\dots,x_k).
 	\end{equation*}
 \end{theorem}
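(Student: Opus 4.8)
The plan is to derive Theorem~\ref{thm:weightedj} from the unweighted Theorem~\ref{thm:j} by slicing $X$ into the level sets of $f$. Since the counting problems under consideration force $X$, and hence each $X_\tau$, to be finite, the symmetric function $f$ takes only finitely many values $v_1,\dots,v_m\in\mathbb{C}$. Set $X^{(\ell)}:=\{x\in X: f(x)=v_\ell\}$ for $1\le\ell\le m$. Because $f(x_{\tau(1)},\dots,x_{\tau(k)})=f(x_1,\dots,x_k)$ for all $\tau\in S_k$, each $X^{(\ell)}$ is itself a symmetric subset of $D^k$, and $X=\bigsqcup_{\ell=1}^m X^{(\ell)}$.

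First I would record that this slicing descends to multisets. If two tuples represent the same $k$-multiset they differ by a permutation of coordinates, so $f$ agrees on them; thus $f$ induces a well-defined function on $\mathcal{M}(X)$, which I still write $f([x_1,\dots,x_k])$. Since the multiplicity constraint defining $\mathcal{M}_j$ refers only to the multiset, we get $\mathcal{M}_j(X)=\bigsqcup_{\ell=1}^m\mathcal{M}_j(X^{(\ell)})$, a multiset lying in $\mathcal{M}_j(X^{(\ell)})$ exactly when its $f$-value is $v_\ell$. Likewise $(X^{(\ell)})_\tau=\{x\in X_\tau: f(x)=v_\ell\}$ for each $\tau$, so $X_\tau=\bigsqcup_{\ell=1}^m (X^{(\ell)})_\tau$.

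The computation is then immediate:
\begin{align*}
\sum_{[x_1,\dots,x_k]\in\mathcal{M}_j(X)}f(x_1,\dots,x_k)
&=\sum_{\ell=1}^m v_\ell\,|\mathcal{M}_j(X^{(\ell)})|\\
&=\sum_{\ell=1}^m v_\ell\cdot\frac{1}{k!}\sum_{\tau\in S_k}w_j(\tau)\,|(X^{(\ell)})_\tau|\\
&=\frac{1}{k!}\sum_{\tau\in S_k}w_j(\tau)\sum_{\ell=1}^m v_\ell\,|(X^{(\ell)})_\tau|\\
&=\frac{1}{k!}\sum_{\tau\in S_k}w_j(\tau)\sum_{x\in X_\tau}f(x_1,\dots,x_k),
\end{align*}
where the second equality applies Theorem~\ref{thm:j} to the symmetric set $X^{(\ell)}$ and the last uses the partition $X_\tau=\bigsqcup_\ell(X^{(\ell)})_\tau$. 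Specializing to $j\ge k$, in which case $(j+1)\nmid\ell_i$ for every cycle length $\ell_i\le k$ and hence $w_j(\tau)=1$ for all $\tau$ (equivalently, invoking \eqref{eq:MX} in place of \eqref{eq:Mj}), yields the second displayed identity in the theorem.

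I do not expect a genuine obstacle here; the only points needing care are the finiteness of the image of $f$ (which rests on the standing assumption that $X$ is finite) and the fact that each level set $X^{(\ell)}$ inherits symmetry from $X$. A self-contained alternative would be to re-run the argument behind Theorem~\ref{thm:j} — the M\"{o}bius inversion over $\Pi_k$ — carrying $f$ along as a weight, since every step there is linear in the quantity attached to each tuple and therefore survives replacing the counting measure by $f$; but the reduction above is shorter, so I would present that.
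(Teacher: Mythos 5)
Your proof is correct, but it takes a different route from the paper: the paper omits the proof entirely, remarking only that one should re-run the M\"{o}bius-inversion argument of Theorem~\ref{thm:j} with the weight $f$ carried along (each step there is linear in the quantity attached to a tuple, so the counting measure can be replaced by $f$ throughout) --- this is precisely the ``self-contained alternative'' you mention at the end. What you present instead is a formal reduction of the weighted statement to the unweighted one: slice $X$ into the level sets $X^{(\ell)}$ of $f$, observe that symmetry of $f$ makes each $X^{(\ell)}$ a symmetric subset of $D^k$ and that both $\mathcal{M}_j(X)$ and each $X_\tau$ decompose accordingly, apply Theorem~\ref{thm:j} to each slice, and sum with coefficients $v_\ell$. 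All the ingredients check out: $f$ descends to multisets because tuples representing the same multiset differ by a coordinate permutation, $(X^{(\ell)})_\tau = X_\tau\cap X^{(\ell)}$, and the finiteness of the image of $f$ follows from the standing finiteness of $X$. The payoff of your route is that the weighted theorems become literal corollaries of the unweighted ones rather than parallel re-derivations, which is arguably tidier; the payoff of the paper's intended route is that it is self-contained and makes visible why the same weights $w_j(\tau)$ appear, without invoking the (harmless but slightly artificial) decomposition into finitely many level sets. Either argument is acceptable, and yours could be stated in the paper in place of the phrase ``completely similar.''
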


\begin{theorem}\label{thm:weighted}
	Let $f:X\to \mathbb{C}$ be a  symmetric function. Then we have
	\begin{equation*}
	\sum_{[x_1,x_2,\dots,x_k]\in\overline{\mathcal{M}}_d(X)}f(x_1,x_2,\dots,x_k)= \frac{1}{k!}\sum_{\tau\in S_k} \overline{w}_d(\tau)\sum_{x\in X_\tau}f(x_1,x_2,\dots,x_k).
	\end{equation*}	
\end{theorem}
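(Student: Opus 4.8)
The plan is to reproduce, with the weight $f$ carried along, the argument that proves Theorem~\ref{thm:MdX}; the essential point is that a symmetric $f$ is constant on each $S_k$-orbit in $X$, so it behaves like the counting measure at every step. Interchanging the order of summation, the right-hand side equals
\[
\frac1{k!}\sum_{x\in X}f(x_1,\dots,x_k)\sum_{\substack{\tau\in S_k\\ x\in X_\tau}}\overline{w}_d(\tau).
\]
For a tuple $x=(x_1,\dots,x_k)$ let $\pi(x)\in\Pi_k$ be its \emph{coincidence partition}, whose blocks $B_1,\dots,B_b$ are the fibres of the map $i\mapsto x_i$; thus $b$ is the number of distinct entries of $x$, equivalently the number of distinct elements of the multiset $[x_1,\dots,x_k]$. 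From the description of $X_\tau$ we have $x\in X_\tau$ precisely when every cycle of $\tau$ lies inside a block of $\pi(x)$, that is, when $\tau$ preserves each block of $\pi(x)$, i.e.\ belongs to the Young subgroup $\prod_{i=1}^b S_{B_i}\le S_k$.

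The crucial step is to evaluate $\sum_{\tau\in\prod_i S_{B_i}}\overline{w}_d(\tau)$. By~\eqref{eq:weight1}, $\overline{w}_d(\tau)=[x^d]\prod_{c}\bigl(1-(1-x)^{|c|}\bigr)$ with the product over the cycles $c$ of $\tau$ (the indeterminate being the one in~\eqref{eq:weight1}); since the cycles of an element of the Young subgroup are the disjoint union of the cycles of its $b$ blockwise components, this sum factors as $[x^d]\prod_{i=1}^b S(|B_i|)$, where $S(m):=\sum_{\sigma\in S_m}\prod_{c}\bigl(1-(1-x)^{|c|}\bigr)$ (product over cycles $c$ of $\sigma$). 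Specializing the exponential generating function~\eqref{eq:exgen} at $t_j=1-(1-x)^j$ yields
\[
\sum_{m\ge0}S(m)\frac{u^m}{m!}=\exp\!\Bigl(\sum_{j\ge1}\bigl(1-(1-x)^j\bigr)\frac{u^j}{j}\Bigr)=\frac{1-(1-x)u}{1-u}=1+x\sum_{j\ge1}u^j,
\]
so that $S(m)=m!\,x$ for every $m\ge1$. As each block $B_i$ is nonempty, it follows that
\[
\sum_{\substack{\tau\in S_k\\ x\in X_\tau}}\overline{w}_d(\tau)=[x^d]\Bigl(x^{b}\prod_{i=1}^{b}|B_i|!\Bigr)=
\begin{cases}\displaystyle\prod_{i=1}^{b}|B_i|! & \text{if }b=d,\\ 0 & \text{otherwise.}\end{cases}
\]

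Substituting this back, only tuples $x\in X$ whose multiset $[x_1,\dots,x_k]$ has exactly $d$ distinct elements contribute, each with weight $\tfrac1{k!}f(x_1,\dots,x_k)\prod_i|B_i(x)|!$; note $\prod_i|B_i(x)|!$ equals $\prod_i m_i!$, the product of the factorials of the multiplicities $m_1,\dots,m_d$ of that multiset, and thus depends only on the multiset. Now group these tuples by their underlying multiset $m$. Because $X$ is symmetric, all $k!/\prod_i m_i!$ orderings of an $m\in\mathcal{M}(X)$ lie in $X$; because $f$ is symmetric, it takes one common value, say $f(m)$, on all of them. Hence each $m\in\overline{\mathcal{M}}_d(X)$ contributes $\tfrac1{k!}\cdot\tfrac{k!}{\prod_i m_i!}\cdot f(m)\cdot\prod_i m_i!=f(m)$, while tuples whose multiset does not have exactly $d$ distinct elements contribute $0$. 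Summing over $m$ gives $\sum_{[x_1,\dots,x_k]\in\overline{\mathcal{M}}_d(X)}f(x_1,\dots,x_k)$, the left-hand side.

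The only step with real content is the generating-function identity $S(m)=m!\,x$ for $m\ge1$, which makes the Young-subgroup sum collapse to $\prod_i|B_i|!$ in degree $d$; the rest is the orbit-counting bookkeeping already used for Theorems~\ref{thm:j} and~\ref{thm:MdX}. Indeed the argument is formally identical to the proof of Theorem~\ref{thm:MdX} with the symbol $1$ replaced by the orbit-constant weight $f$, so one may alternatively deduce Theorem~\ref{thm:weighted} from Theorem~\ref{thm:MdX}: both sides are $\mathbb{C}$-linear in $f$, and any symmetric $f\colon X\to\mathbb{C}$ is a linear combination of the indicator functions of the $S_k$-orbits in $X$, each of which is the orbit $X'=\{x\in D^k:[x_1,\dots,x_k]=m\}$ of some multiset $m$ and hence a symmetric subset of $D^k$ to which Theorem~\ref{thm:MdX} applies directly.
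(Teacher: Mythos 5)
Your proof is correct, but it takes a genuinely different route from the paper's. The paper proves Theorem \ref{thm:MdX} (and, by the same omitted argument, Theorem \ref{thm:weighted}) by M\"{o}bius inversion on the partition lattice $\Pi_k$: it introduces the sets $X_\sigma^\circ$, inverts, and then must verify the combinatorial identity of Lemma \ref{lem:2}, which requires the explicit Sch\"{u}tzenberger--Frucht--Rota formula \eqref{eq:mobius} for $\mu(\tau,\hat{1})$. You instead start from the right-hand side, interchange the sums, observe that $\{\tau\in S_k: x\in X_\tau\}$ is exactly the Young subgroup attached to the coincidence partition of $x$, and collapse the weight sum over that subgroup via the exponential formula \eqref{eq:exgen} specialized at $t_j=1-(1-x)^j$; the identity $S(m)=m!\,x$ for $m\geq 1$ does all the work, and no M\"{o}bius function is needed. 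Both proofs ultimately rest on a generating-function computation, but yours is more self-contained and verifies the stated weights directly, whereas the paper's lattice-theoretic setup is what \emph{produces} the weights $w_j$ and $\overline{w}_d$ in the first place and treats Theorems \ref{thm:j} and \ref{thm:MdX} uniformly. Your closing remark --- that by $\mathbb{C}$-linearity in $f$ one may reduce to indicator functions of $S_k$-orbits and invoke Theorem \ref{thm:MdX} for each orbit, which is itself a symmetric subset of $D^k$ --- is a cleaner justification of the weighted version than the paper's ``completely similar, omitted,'' and is worth recording.
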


This paper is organized as follows. In Section 2, we prove the sieve formulas, Theorem \ref{thm:j} and Theorem \ref{thm:MdX}, via the M\"{o}bius inversion formula. Then we give two illustrative applications of the sieve formulas in Section 3,

{\bf Notation.} To distinguish between sets and multisets, we use the square bracket notation to denote multisets. Thus for instance, the multiset $\{a,a,b\}$ is denoted by $[a,a,b]$. If $F(x)=\sum_{n=0}^\infty a_nx^n$ is a formal power series, then we use $[x^n]F(x)=a_n$ to denote the coefficient of $x^n$ in $F(x)$. If $S$ is a statement, we use $1_{S}$ to denote the indicator function of $S$, thus $1_{S}=1$ when $S$ is true and $1_S = 0$ when $S$ is false. We often abbreviate partially ordered set as poset.  We use $\bm{0}$ and $\bm{1}$ to denote the least element and the greatest element in a poset, respectively.

\section{Proofs of the main results}

In this section, we prove Theorem \ref{thm:j} and  Theorem \ref{thm:MdX} via the M\"{o}bius inversion formula. 
We first recall the M\"{o}bius inversion formula on posets.

\begin{proposition}[\cite{stanley}, Proposition 3.7.1]\label{prop:Mobius}
	Let $(P,\leq)$ be a poset. Define the M\"{o}bius function $\mu$ of $P$ recursively by
	$$\mu(x,x)=1\ \ \text{for all}\ x\in P, \ \ \mu(x,y)=-\sum_{x\leq z< y}\mu(x,z)\ \ \text{for all}\ x<y\ \text{in}\ P.$$
	Then for $f,g:P\to K$, where $K$ is a field, we have
	$$ g(x)=\sum_{x\leq y}f(y)\  \ \text{for all}\ x\in P$$
	if and only if
	$$ f(x) =\sum_{x\leq y}\mu(x,y) g(y)\  \ \text{for all}\ x\in P.$$
\end{proposition}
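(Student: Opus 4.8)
The plan is to deduce the proposition from a single algebraic fact: in the incidence algebra of $P$, the M\"obius function $\mu$ is the two-sided convolution inverse of the zeta function. First I would set up that incidence algebra. Assuming $P$ is locally finite (which holds in all applications here, where $P=\Pi_k$ is finite), let $I(P)$ be the $K$-algebra of all functions $\alpha$ on the set $\{(x,y)\in P\times P:x\leq y\}$, equipped with the convolution product $(\alpha\beta)(x,y)=\sum_{x\leq z\leq y}\alpha(x,z)\beta(z,y)$; its identity is $\delta(x,y)=1_{x=y}$, and I write $\zeta$ for the element with $\zeta(x,y)=1$ for all $x\leq y$. The recursion in the statement is just a rewriting of $\mu*\zeta=\delta$: the two conditions $\mu(x,x)=1$ and $\mu(x,y)=-\sum_{x\leq z<y}\mu(x,z)$ together say exactly that $\sum_{x\leq z\leq y}\mu(x,z)=1_{x=y}$, which (since $\zeta\equiv 1$) is $(\mu*\zeta)(x,y)=\delta(x,y)$.

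The second step is to promote ``$\mu$ is a left inverse of $\zeta$'' to ``$\mu$ is a genuine inverse''. I would introduce the element $\mu'$ defined by the dual recursion $\mu'(x,x)=1$ and $\mu'(x,y)=-\sum_{x<z\leq y}\mu'(z,y)$, which in the same way encodes $\zeta*\mu'=\delta$, and then use associativity of convolution: $\mu=\mu*\delta=\mu*(\zeta*\mu')=(\mu*\zeta)*\mu'=\delta*\mu'=\mu'$. Hence $\zeta*\mu=\zeta*\mu'=\delta$ as well, so $\zeta$ is invertible in $I(P)$ with inverse $\mu$.

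Finally I would let $I(P)$ act on functions $h:P\to K$ on the left by $(\alpha h)(x)=\sum_{x\leq y}\alpha(x,y)h(y)$; interchanging (finite) sums shows this is a module action, $\alpha(\beta h)=(\alpha\beta)h$, and $\delta h=h$. The hypothesis $g(x)=\sum_{x\leq y}f(y)$ is precisely $g=\zeta f$; applying $\mu$ gives $\mu g=(\mu*\zeta)f=\delta f=f$, that is, $f(x)=\sum_{x\leq y}\mu(x,y)g(y)$. Conversely, $f=\mu g$ gives $\zeta f=(\zeta*\mu)g=\delta g=g$, i.e. $g(x)=\sum_{x\leq y}f(y)$. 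This settles both implications at once.

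The only genuinely delicate point is the passage from a one-sided to a two-sided inverse of $\zeta$, i.e. checking that the $\mu$ built by a recursion on its second argument also satisfies the dual identity $\sum_{x\leq z\leq y}\mu(z,y)=1_{x=y}$; the associativity trick above sidesteps this, but one could instead verify it directly by induction on the length of the interval $[x,y]$. Everything else is bookkeeping with finite sums, legitimate exactly because $P$ is locally finite.
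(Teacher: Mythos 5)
Your argument is correct, but note that the paper does not prove this proposition at all: it is quoted verbatim from Stanley (Proposition 3.7.1) and used as a black box, so there is no in-paper proof to compare against. What you have written is essentially the standard proof from that reference: encode the recursion as $\mu*\zeta=\delta$ in the incidence algebra, upgrade the one-sided inverse to a two-sided one via the $\mu=\mu*(\zeta*\mu')=(\mu*\zeta)*\mu'=\mu'$ associativity trick, and then read off both implications from the module action $g=\zeta f\iff f=\mu g$. All the steps check out. One small point of hygiene: local finiteness of $P$ guarantees that the convolution products are finite sums, but it does not by itself make the sums $\sum_{x\leq y}f(y)$ in the statement finite (take $P=\mathbb{N}$ with the usual order); for this ``upward'' form of M\"obius inversion one needs every principal dual order ideal $\{y:y\geq x\}$ to be finite, which is what also legitimizes your interchange of sums in verifying $\alpha(\beta h)=(\alpha\beta)h$. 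Since the only poset the paper ever feeds into this proposition is the finite lattice $\Pi_k$, this is immaterial here, but the hypothesis is worth stating precisely.
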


Let $\Pi_k$ be the set of all partitions of $\{1,2,\dots,k\}$. Define a partial order $\leq $  on $\Pi_k$ by refinement. That is, declare $\tau\leq \sigma$ if every block of $\tau$ is contained in a block of $\sigma$. Computing the M\"{o}bius function $\mu$ of the poset $(\Pi_k,\leq)$ is a non-trivial result in enumerative combinatorics. We cite it directly from \cite{stanley} without a proof.

\begin{proposition}[\cite{stanley}, Example 3.10.4]\label{prop:Mobius function}
	Let $\tau,\sigma\in \Pi_k$ and $\tau\leq\sigma$. Suppose that $\sigma=\{B_1,B_2,\ldots,B_\ell\}$ and that $B_i$, $1\leq i\leq \ell$ is partitioned into $\lambda_i$ blocks in $\tau$. Then the M\"{o}bius function $\mu(\tau,\sigma)$ is given by
	\begin{equation}\label{eq:mobius}
	\mu(\tau,\sigma)=(-1)^{\lambda_1-1}(\lambda_1-1)!(-1)^{\lambda_2-1}(\lambda_2-1)!\cdots(-1)^{\lambda_\ell-1}(\lambda_\ell-1)!.
	\end{equation}
\end{proposition}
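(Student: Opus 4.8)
The plan is to reduce the computation of $\mu(\tau,\sigma)$ to the single special case $\mu_{\Pi_n}(\hat 0,\hat 1)$ by exploiting the product structure of intervals in $\Pi_k$, and then to evaluate that special case by a short generating-function computation. \emph{First} I would establish that every interval of $\Pi_k$ is a product of smaller partition lattices. Given $\tau\le\sigma$ with $\sigma=\{B_1,\dots,B_\ell\}$ and $B_i$ partitioned into $\lambda_i$ blocks in $\tau$, the map sending a partition $\pi$ with $\tau\le\pi\le\sigma$ to the tuple $(\pi|_{B_1},\dots,\pi|_{B_\ell})$ — where $\pi|_{B_i}$ is the partition of the $\lambda_i$ blocks of $\tau$ lying inside $B_i$ that is induced by $\pi$ after collapsing each $\tau$-block to a point — is an isomorphism of posets $[\tau,\sigma]\cong\Pi_{\lambda_1}\times\cdots\times\Pi_{\lambda_\ell}$. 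This is routine to check: every block of such a $\pi$ lies inside a single $B_i$ because $\pi\le\sigma$, and is a union of $\tau$-blocks because $\tau\le\pi$; the inverse map glues chosen partitions of the blocks inside each $B_i$; and both maps are order-preserving.

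\emph{Next} I would invoke the multiplicativity of the Möbius function over products: for any two posets $P,Q$ one has $\mu_{P\times Q}\big((p,q),(p',q')\big)=\mu_P(p,p')\,\mu_Q(q,q')$, which follows by an easy induction on the length of the interval $[(p,q),(p',q')]$ directly from the recursion in Proposition \ref{prop:Mobius} (equivalently, the incidence algebra of $P\times Q$ is the tensor product of those of $P$ and $Q$). Since $\mu(\tau,\sigma)$ depends only on the interval $[\tau,\sigma]$, combining this with the previous step gives
$$\mu(\tau,\sigma)=\prod_{i=1}^{\ell}\mu_{\Pi_{\lambda_i}}(\hat 0,\hat 1),$$
so everything reduces to proving $\mu_{\Pi_n}(\hat 0,\hat 1)=(-1)^{n-1}(n-1)!$ for every $n\ge 1$.

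\emph{Then} I would evaluate $f(n):=\mu_{\Pi_n}(\hat 0,\hat 1)$, noting $f(1)=1$. Applying the previous two steps to the interval $[\hat 0,\pi]$ for an arbitrary $\pi\in\Pi_n$ (where $\hat 0$ restricted to a block $B$ has exactly $|B|$ sub-blocks) yields $\mu_{\Pi_n}(\hat 0,\pi)=\prod_{B\in\pi}f(|B|)$. On the other hand, the defining relation of $\mu$ (Proposition \ref{prop:Mobius} with $x=\hat 0$, $y=\hat 1$) gives $\sum_{\pi\in\Pi_n}\mu_{\Pi_n}(\hat 0,\pi)=0$ for $n\ge 2$, while it equals $1$ for $n\in\{0,1\}$. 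Grouping the partitions $\pi$ by their multiset of block sizes and applying the exponential formula (see \cite{stanley}), this is exactly the identity
$$\exp\!\Big(\sum_{m\ge 1}f(m)\frac{x^m}{m!}\Big)=1+x.$$
Taking logarithms gives $\sum_{m\ge1}f(m)\frac{x^m}{m!}=\log(1+x)=\sum_{m\ge1}\frac{(-1)^{m-1}}{m}x^m$, so $f(m)=(-1)^{m-1}(m-1)!$; substituting back into the product formula of the previous paragraph reproduces \eqref{eq:mobius}.

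The two structural steps are essentially formal; the real content — and the place to be careful — is the last one, specifically the bookkeeping that converts the vanishing relation $\sum_{\pi\in\Pi_n}\mu(\hat 0,\pi)=1_{n\le 1}$ into the generating-function identity, i.e. justifying that the sum over $\pi$ of the multiplicative weight $\prod_{B\in\pi}f(|B|)$ is $\exp$ of the "connected" series $\sum_m f(m)x^m/m!$. If one prefers to avoid the exponential formula, the same conclusion follows by strong induction on $n$: assuming $f(i)=(-1)^{i-1}(i-1)!$ for all $i<n$, expand $0=\sum_{\pi\ne\hat 1}\mu(\hat 0,\pi)+f(n)$, evaluate each summand by the (now known) product formula, and sum over partition types using the standard count $n!\big/\prod_i (i!)^{c_i}c_i!$ of partitions of $[n]$ of type $(c_1,c_2,\dots)$; the resulting finite identity — equivalently $n![x^n]\big((1+x)-1-\sum_{i<n}(-1)^{i-1}x^i/i\big)$ type manipulation — is elementary to verify.
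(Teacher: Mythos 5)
The paper does not actually prove this proposition: it states explicitly that the M\"{o}bius function of $\Pi_k$ is ``cited directly from \cite{stanley} without proof'' (Example 3.10.4 there), so there is no in-paper argument to compare yours against. Your proposal is a correct, self-contained proof along the standard lines. The two structural steps are fine: the interval isomorphism $[\tau,\sigma]\cong\Pi_{\lambda_1}\times\cdots\times\Pi_{\lambda_\ell}$ holds for exactly the reasons you give, and multiplicativity of $\mu$ over direct products follows from the recursion in Proposition \ref{prop:Mobius}, so everything reduces to $f(n)=\mu_{\Pi_n}(\hat 0,\hat 1)$. Your evaluation of $f(n)$ is also sound: combining $\sum_{\pi\in\Pi_n}\mu(\hat 0,\pi)=1_{n\leq 1}$ with $\mu(\hat 0,\pi)=\prod_{B\in\pi}f(|B|)$ and the count $n!/\prod_i (i!)^{c_i}c_i!$ of partitions of type $(c_1,c_2,\dots)$ is precisely the identity $n!\,[x^n]\exp\bigl(\sum_{m\geq 1}f(m)x^m/m!\bigr)=1_{n\leq 1}$, i.e. $\exp\bigl(\sum_{m\geq 1}f(m)x^m/m!\bigr)=1+x$, whence $f(m)=(-1)^{m-1}(m-1)!$; this type-by-type bookkeeping is the same exponential/logarithm manipulation the paper itself uses in Lemma \ref{lem:1}, and your inductive fallback avoids the exponential formula altogether if one prefers. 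For comparison, Stanley's Example 3.10.4 performs the same reduction to $\mu_{\Pi_n}(\hat 0,\hat 1)$ but evaluates it by M\"{o}bius-inverting $x^n=\sum_{\pi}(x)_{|\pi|}$ to get $(x)_n=\sum_{\pi}\mu(\hat 0,\pi)x^{|\pi|}$ and reading off the coefficient of $x$; your route is an equally valid alternative, and I see no gap in it.
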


In analogy to the type of a permutation,  a partition $\tau\in \Pi_k$ is said to be of type $(a_1,a_2,\dots,a_k)$ if it has exactly $a_i$ blocks of size $i$, $1\leq i\leq k$. It is not hard to see that the number of partitions in $\Pi_k$ of type $(a_1,a_2,\ldots,a_k)$ is given by
\begin{equation}\label{eq:Na}
\widetilde{N}(a_1,a_2,\ldots,a_k)=\frac{k!}{1!^{a_1}a_1!2!^{a_2}a_2!\cdots k!^{a_k}a_k!}.
\end{equation}

For the purpose of our proof, we need two combinatorial equalities.
\begin{lemma}\label{lem:1}
	Let $\widetilde{N}(a_1,a_2,\dots,a_k)$ be defined as in \eqref{eq:Na} and let $j$ be a positive integer. Then we have
	$$\sum_{\substack{\sum ia_i=k\\a_{j+1}=\cdots=a_{k}=0}} \widetilde{N}(a_1,\ldots,a_k)1!^{a_1}\cdots k!^{a_k}  (-1)^{a_1+\cdots+a_k-1}(a_1+\cdots+a_k-1)!=(k-1)!(1-(j+1)1_{(j+1)\mid k}).
	$$
\end{lemma}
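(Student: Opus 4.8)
The left-hand side is a sum over partitions $\tau\in\Pi_k$ all of whose blocks have size $\le j$, weighted by the M\"obius value $\mu(\hat 0,\tau)=(-1)^{b-1}(b-1)!$ where $b=a_1+\cdots+a_k$ is the total number of blocks, and by the factor $1!^{a_1}2!^{a_2}\cdots k!^{a_k}$. The natural tool is the exponential generating function. My plan is to introduce a bookkeeping variable $u$ marking $k$ and a variable $y$ marking the number of blocks, and compute
\begin{align*}
F(u,y):=\sum_{k\ge 0}\frac{u^k}{k!}\sum_{\substack{\sum ia_i=k\\ a_{j+1}=\cdots=a_k=0}} N'(a_1,\dots,a_k)\,1!^{a_1}2!^{a_2}\cdots k!^{a_k}\,y^{a_1+\cdots+a_k}.
\end{align*}
Since $N'(a_1,\dots,a_k)=k!/\prod_i (i!^{a_i}a_i!)$, the factor $\prod_i i!^{a_i}$ cancels exactly the $\prod_i i!^{a_i}$ in the denominator, leaving $k!/\prod_i a_i!$. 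Hence $F(u,y)=\sum_k u^k \sum_{\sum ia_i=k,\ a_i=0\ (i>j)} \prod_{i=1}^{j}\frac{y^{a_i}}{a_i!} = \exp\!\bigl(y(u+u^2+\cdots+u^j)\bigr)$.

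Next I would account for the M\"obius weight. Using $(-1)^{b-1}(b-1)! = -\int_0^\infty (-t)^{b}\,\frac{dt}{t}$ is one route, but cleaner is to recall that $\sum_{b\ge 1}(-1)^{b-1}(b-1)!\,\frac{z^b}{?}$ does not converge; instead I will use the standard identity that if $G(u)=\sum_{k\ge 1}g_k\frac{u^k}{k!}$ and we form $\sum_b (-1)^{b-1}(b-1)! \,[\text{$b$-block structures}]$, the EGF is $\log(1+G(u))$. Concretely, the number we want is obtained from $F(u,y)$ by replacing $y^b$ with $(-1)^{b-1}(b-1)!$; since $\exp(yG)=\sum_b \frac{y^b G^b}{b!}$ with here $G=u+\cdots+u^j$, applying the substitution gives $\sum_{b\ge1}\frac{(-1)^{b-1}(b-1)!}{b!}G^b=\sum_{b\ge1}\frac{(-1)^{b-1}}{b}G^b=\log(1+G)=\log(1+u+u^2+\cdots+u^j)$. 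So the claim reduces to
\begin{align*}
k!\,[u^k]\log(1+u+u^2+\cdots+u^j) = (k-1)!\bigl(1-(j+1)\,1_{(j+1)\mid k}\bigr),
\end{align*}
i.e. $[u^k]\log\frac{1-u^{j+1}}{1-u} = \frac{1}{k}\bigl(1-(j+1)1_{(j+1)\mid k}\bigr)$ for $k\ge1$.

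Finally I would verify this last coefficient identity directly: $\log\frac{1-u^{j+1}}{1-u}=\log(1-u^{j+1})-\log(1-u)=-\sum_{m\ge1}\frac{u^{(j+1)m}}{m}+\sum_{n\ge1}\frac{u^n}{n}$, so $[u^k]$ of this is $\frac1k$ minus, if $(j+1)\mid k$ with $k=(j+1)m$, the term $\frac1m=\frac{j+1}{k}$; this is exactly $\frac1k(1-(j+1)1_{(j+1)\mid k})$. Multiplying by $k!$ gives the lemma.

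\textbf{Main obstacle.} The only delicate point is justifying the passage from the variable $y$ marking blocks to the M\"obius weights $(-1)^{b-1}(b-1)!$ — i.e. recognizing that $\sum_b \frac{(-1)^{b-1}(b-1)!}{b!}G^b=\log(1+G)$ and that this is legitimate as formal power series (it is, since $G=u+\cdots+u^j$ has no constant term, so $\log(1+G)$ is a well-defined element of $\mathbb{Q}[[u]]$ and all manipulations are formal). Everything else — the cancellation of the $i!^{a_i}$ factors producing $\exp(y(u+\cdots+u^j))$, and the explicit logarithm expansion — is routine. I should also double-check the $k=0$ edge case is harmless since the lemma is stated for the coefficient of $u^k$ with the $(k-1)!$ on the right forcing $k\ge1$, which matches $\log(1+G)$ having zero constant term.
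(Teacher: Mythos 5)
Your proposal is correct and follows essentially the same route as the paper: after cancelling the $i!^{a_i}$ factors against $N'$, both arguments reduce the sum to $k!\,[u^k]\log(1+u+\cdots+u^j)$ and evaluate it via $\log\frac{1-u^{j+1}}{1-u}$. The two-variable generating function $F(u,y)$ and the substitution $y^b\mapsto(-1)^{b-1}(b-1)!$ are just a repackaging of the paper's direct manipulation of the multinomial sum $\sum_m\frac{(-1)^{m-1}}{m}[x^k](x+\cdots+x^j)^m$.
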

\begin{proof}
		Substituting \eqref{eq:Na} into the above equation, we see that the left-hand side is
{\allowdisplaybreaks
	\begin{align*}
\text{LHS}&=k!\sum_{\substack{\sum ia_i=k}} (-1)^{a_1+\cdots+a_j-1}\frac{(a_1+\cdots+a_j-1)!}{a_1!\cdots a_j!}\\
	&=k!\sum_{\substack{\sum  ia_i=k}} \frac{(-1)^{a_1+\cdots+a_j-1}}{a_1+\cdots+a_j}\binom{a_1+\cdots+a_j}{a_1,\dots,a_j} \\
	&=k!\sum_{m=1}^\infty \frac{(-1)^{m-1}}{m}\sum_{\substack{\sum ia_i=k\\ \sum a_i=m}} \binom{m}{a_1,\dots,a_j}\\
	&=k!\sum_{m=1}^\infty \frac{(-1)^{m-1}}{m}[x^k](x+x^2+\cdots+x^j)^m\\
	&=k![x^k]\log (1+x+\cdots+x^j)\\
	&=k![x^k](\log(1-x^{j+1})-\log(1-x))\\
	&=(k-1)!(1-(j+1)1_{(j+1)\mid k}).
	\end{align*}}
	This proves the lemma.
\end{proof}

\begin{lemma}\label{lem:2}
	Let $\widetilde{N}(a_1,a_2,\dots,a_k)$ be defined as in \eqref{eq:Na} and let $d$ be an integer with $1\leq d\leq k$. Then we have
	$$\sum_{\substack{\sum ia_i=k\\ a_1+\cdots+a_k=d}} \widetilde{N}(a_1,\ldots,a_k)1!^{a_1}\cdots k!^{a_k}  (-1)^{a_1+\cdots+a_k-1}(a_1+\cdots+a_k-1)!=(k-1)!(-1)^{d-1}\binom{k}{d}.
	$$
\end{lemma}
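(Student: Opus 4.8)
The plan is to follow the proof of Lemma~\ref{lem:1} almost verbatim, the only change being that the constraint $a_1+\cdots+a_k=d$ now replaces the truncation $a_{j+1}=\cdots=a_k=0$. First I would substitute the explicit count \eqref{eq:Na} for $N'(a_1,\dots,a_k)$ into the left-hand side. As in Lemma~\ref{lem:1}, the factorial factor cancels the factors $i!^{a_i}$ coming from the denominator of $N'$, leaving $k!/(a_1!\cdots a_k!)$; and since we sum only over tuples with $a_1+\cdots+a_k=d$, the quantities $(-1)^{a_1+\cdots+a_k-1}$ and $(a_1+\cdots+a_k-1)!$ are constant and pull out of the sum. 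Thus the left-hand side equals
\[
k!\,(-1)^{d-1}(d-1)!\sum_{\substack{\sum i a_i=k\\ a_1+\cdots+a_k=d}}\frac{1}{a_1!\cdots a_k!}.
\]

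Next I would evaluate the remaining sum. Multiplying and dividing by $d!$ turns $1/(a_1!\cdots a_k!)$ into $\tfrac{1}{d!}\binom{d}{a_1,\dots,a_k}$, and the two constraints $\sum a_i=d$, $\sum i a_i=k$ identify the resulting sum of multinomial coefficients with $[x^k](x+x^2+x^3+\cdots)^d = [x^k]\,x^d(1-x)^{-d} = [x^{k-d}](1-x)^{-d} = \binom{k-1}{d-1}$. (Equivalently, one can argue directly that $\sum_{\sum a_i=d,\ \sum i a_i=k}\binom{d}{a_1,\dots,a_k}$ counts compositions of $k$ into $d$ positive parts, of which there are $\binom{k-1}{d-1}$; I would present whichever is cleaner.) So the inner sum equals $\tfrac{1}{d!}\binom{k-1}{d-1}$, and plugging back in, the left-hand side becomes $k!\,(-1)^{d-1}(d-1)!\cdot\tfrac{1}{d!}\binom{k-1}{d-1}=k!\,(-1)^{d-1}\tfrac{1}{d}\binom{k-1}{d-1}$.

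Finally I would simplify the binomial coefficients: the elementary identity $\tfrac{1}{d}\binom{k-1}{d-1}=\tfrac{1}{k}\binom{k}{d}$ (both sides equal $\tfrac{(k-1)!}{d!(k-d)!}$) gives $k!\,(-1)^{d-1}\cdot\tfrac{1}{k}\binom{k}{d}=(k-1)!\,(-1)^{d-1}\binom{k}{d}$, which is exactly the right-hand side. There is no serious obstacle here — the lemma is a routine generating-function computation parallel to Lemma~\ref{lem:1}. The only points requiring a little care are the bookkeeping when passing from the multinomial sum to coefficient extraction (ensuring the factor is $x^d(1-x)^{-d}$, i.e.\ that the parts start at $x^1$, not $x^0$) and the closing binomial manipulation.
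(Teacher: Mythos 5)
Your proof is correct and takes essentially the same route as the paper's: substitute the formula for $N'$, pull out the constant $(-1)^{d-1}(d-1)!$, identify the multinomial sum with $[x^k]\left(x+x^2+\cdots\right)^d=[x^{k-d}](1-x)^{-d}=\binom{k-1}{d-1}$, and close with the binomial identity $\frac{1}{d}\binom{k-1}{d-1}=\frac{1}{k}\binom{k}{d}$. The only cosmetic difference is that the paper writes the generating function as $(x-x^{k+1})^d(1-x)^{-d}$ before extracting the coefficient, which is equivalent to your truncation remark.
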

\begin{proof}
	Similar to the proof of the previous lemma, a substitution of \eqref{eq:Na} into the above equation yields
		\begin{align*}
 \text{LHS}& =k!\frac{(-1)^{d-1}}{d}\sum_{\substack{\sum ia_i=k\\ a_1+\cdots+a_k=d}}\frac{d!}{a_1!\cdots a_k!}\\
            & = k! \frac{(-1)^{d-1}}{d} [x^k](x+x^2+\cdots+x^k)^d\\
            & = k! \frac{(-1)^{d-1}}{d} [x^k] (x-x^{k+1})^d(1-x)^{-d}\\
	& =(k-1)!(-1)^{d-1}\binom{k}{d}.
	\end{align*}
	The lemma then follows.
\end{proof}

Now we prove Theorem \ref{thm:j}.
\begin{proof}[Proof of Theorem {\rm\ref{thm:j}}]
	For a partition $\tau\in \Pi_k$, define $X^\circ_\tau$ to be the set of ordered $k$-tuples $(x_1,x_2,\dots,x_k)$ such that $(x_1,x_2,\dots,x_k)\in X_\tau$ but $(x_1,x_2,\dots,x_k)\notin X_\sigma$ for any $\sigma>\tau$.	It is not hard to check that $|X_\tau|=\sum_{\tau\leq \sigma}|X^\circ_\sigma|$. Then the M\"{o}bius inversion formula (Proposition \ref{prop:Mobius})  gives
	\begin{equation}\label{eq:Xcirctau}
	|X^\circ_\tau| =\sum_{\tau\leq \sigma}\mu(\tau,\sigma)|X_\sigma|.
	\end{equation}
	
	Suppose that $\tau=\{B_1,B_2,\dots,B_\ell\}$ and the size of  $B_i$ is $m_i$, $1\leq i\leq \ell$. We observe from the definition of $X_\tau^\circ$ that the multiplicities of elements in the multiset $[x_1,x_2,\dots,x_k]$ with $(x_1,x_2,\dots,x_k)\in X_\tau^\circ$ are $m_1,m_2,\dots,m_\ell$. Thus for $(x_1,x_2,\dots,x_k)\in X_\tau^\circ$,  the multiplicity of each element in $[x_1,x_2,\dots,x_k]$ that is no greater than $j$ is equivalent to the size of each block of $\tau$ that  is no greater than $j$. Since the number of permutations of this multiset is $\binom{k}{m_1,m_2,\dots,m_\ell}$, the number of $k$-multiset satisfying the restriction $X^\circ_\tau$ is
	\begin{equation}\label{eq:MX1}
	|\mathcal{M}(X^\circ_\tau)|=\frac{m_1!m_2!\cdots m_\ell!}{k!}|X^\circ_\tau|.
	\end{equation}
	Note that $X=\bigcup_{\tau\in\Pi_k}X_\tau^\circ$ is a disjoint union of $X_\tau$, so we conclude that
	\begin{align*}
	|\mathcal{M}_{j}(X)|&=\sum_{\tau\in \Pi_k: \text{the size of each block of}\ \tau  \leq j}|\mathcal{M}(X_\tau^\circ)|.
	\end{align*}
    Substituting \eqref{eq:Xcirctau} and \eqref{eq:MX1} into the above equation, we obtain
	\begin{align*}
		|\mathcal{M}_{j}(X)|&=\sum_{\tau\in \Pi_k: m_i\leq j,1\leq i\leq \ell}\frac{m_1!m_2!\cdots m_\ell!}{k!}|X^\circ_\tau|\\
		&=\frac{1}{k!}\sum_{\sigma\in \Pi_k}\Big(\sum_{\tau\leq \sigma:m_i\leq j,1\leq i\leq \ell}m_1!m_2!\cdots m_\ell!\mu(\tau,\sigma)\Big)|X_\sigma|.
	\end{align*}
	Note that here $m_i$ and $\ell$ should be $m_i(\tau)$ and $\ell(\tau)$ respectively, but we omit the variable $\tau$ for notational simplicity.
	
	Since the number of cyclic permutations of length $k$ in $S_k$ is $(k-1)!$, a partition $\sigma$ in $\Pi_k$ with block sizes $n_1,n_2,\dots,n_{r}$ corresponds to $(n_1-1)!(n_2-1)!\cdots (n_{r}-1)!$ permutations in $S_k$.
	Thus to prove \eqref{eq:Mj}, it suffices to show
	\begin{align}\label{eq:Sum}
	\sum_{\tau\leq \sigma:m_i\leq j,1\leq i\leq \ell}m_1!m_2!\cdots m_\ell!\mu(\tau,\sigma)=(n_1-1)!(n_2-1)!\cdots (n_{r}-1)!w_j(\sigma),
	\end{align}
	where $m_1,m_2,\dots,m_{\ell}$ are the block sizes of $\tau$ and $n_1,n_2,\dots,n_{r}$ are the block sizes of $\sigma$.
	We observe from \eqref{eq:mobius} that the sum  on the left-hand side of \eqref{eq:Sum} can be written as a product of the same sum taken over each block of $\sigma$. In view of this and the definition of $w_{j}(\sigma)$,  it suffices to show \eqref{eq:Sum} for partition $\sigma$ with a single block (that is, $\sigma=\bm{1}$), as the general case follows from this special case.
	
	Thus we may assume that  $\sigma=\bm{1}$ and we need to show
\begin{align}\label{eq:sigma1}
\sum_{\tau\in\Pi_k:m_i\leq j,1\leq i\leq \ell}m_1!m_2!\cdots m_\ell!\mu(\tau,\bm{1})=(k-1)!(1-(j+1)1_{(j+1)\mid k}).
\end{align}
Using \eqref{eq:mobius}, the left-hand side can be simplified as
	\begin{align*}
		\sum_{\tau\leq \bm{1}:m_i\leq j,1\leq i\leq \ell}m_1!m_2!\cdots m_\ell!\mu(\tau,\bm{1})&=\sum_{\sum ia_i=k}\sum_{\substack{\tau\in \Pi_k:\type(\tau)=(a_1,\dots,a_k)\\a_{j+1}=\cdots=a_{k}=0}}1!^{a_1}\cdots k!^{a_k}\mu(\tau,\bm{1})\\
		&=\sum_{\substack{\sum ia_i=k\\a_{j+1}=\cdots=a_k=0}}\widetilde{N}(a_1,\dots,a_k)1!^{a_1}\cdots k!^{a_k}(-1)^{a_1+\cdots+a_k-1}(a_1+\cdots+a_k-1)!\\
		&=(k-1)!(1-(j+1)1_{(j+1)\mid k}).
	\end{align*}
	The last step is due to Lemma \ref{lem:1}. This completes the proof.
\end{proof}

Next we prove Theorem \ref{thm:MdX}.
\begin{proof}[Proof of Theorem {\rm \ref{thm:MdX}}]
	A similar argument as in the previous proof yields
	\begin{align*}
	|\overline{\M}_d(X)|=\sum_{\tau\in\Pi_k:\tau\ \text{has eaxctly}\ d\ \text{blocks}}|\M(X_\tau^\circ)|.
	\end{align*}
	Substituting \eqref{eq:Xcirctau} and \eqref{eq:MX1} into the above equation, we obtain
	\begin{align*}
	|\overline{\M}_d(X)|&=\sum_{\tau\in\Pi_i}\frac{m_1!m_2!\cdots m_d!}{k!}|X_\tau^\circ|\\
	&=\frac{1}{k!} \sum_{\sigma\in\Pi_k}\sum_{\tau\leq \sigma}m_1!m_2!\cdots m_d!\mu(\tau,\sigma)|X_\sigma|,
	\end{align*}

   Again, as in the previous proof, the proof will be completed if we can show
   \begin{align*}
    \sum_{\tau\leq \sigma}m_1!m_2!\cdots m_d!\mu(\tau,\sigma)=(n_1-1)!(n_2-1)!\cdots (n_r-1)!\overline{w}_d(\sigma),
   \end{align*}
   where $m_1,m_2,\dots,m_{d}$ are the block sizes of $\tau$ and $n_1,n_2,\dots,n_r$ are the block sizes of $\sigma$, and it can be further reduced to the case that $\sigma=\bm{1}$. Thus we need to show
   \begin{align*}
   \sum_{\tau \in\Pi_k}m_1!m_2!\cdots m_d!\mu(\tau,\bm{1})=(k-1)![x^d](1-(1-x)^k).
   \end{align*}
   Again, using \eqref{eq:mobius}, the left-hand side can be simplified as
   \begin{align*}
   \sum_{\tau \in\Pi_k}m_1!m_2!\cdots m_d!\mu(\tau,\bm{1})&=\sum_{\sum ia_i=k}\sum_{\substack{\tau\in \Pi_k:\type(\tau)=(a_1,\dots,a_k)\\ a_1+\cdots+a_k=d}}1!^{a_1}\cdots k!^{a_k}\mu(\tau,\bm{1})\\
   &=\sum_{\substack{\sum ia_i=k\\ a_1+\cdots+a_k=d}}\widetilde{N}(a_1,\dots,a_k)1!^{a_1}\cdots k!^{a_k}(-1)^{a_1+\cdots+a_k}(a_1+\cdots+a_k-1)!\\
   &=(k-1)!(-1)^{d-1}\binom{k}{d}\\
   &=(k-1)![x^d](1-(1-x)^k).
   \end{align*}
   where we used Lemma \ref{lem:2}. This completes the proof.
\end{proof}
	
The proofs of the weighted versions are omitted since they are completely similar.

\section{Applications to partitions over finite fields and zero-sum multisets over $\Z/n\Z$  }\label{sec:application}

To illustrate the application of our sieve formula, we  investigate two combinatorial problems  which are partitions over finite fields and zero-sum multisets over the group of integers modulo $n$.

\subsection{Partitions over finite fields}
Motivated by the conjecture on polynomials with prescribed range, Muratovi\'{c}-Ribi\'{c} and Wang \cite{MW2013} considered the problem of counting the number of partitions over finite fields. To be precise, let $\F_q$ be a finite field of $q$ elements and $\F_q^*$ be its multiplicative group. A {\it partition} of an element $b\in\F_q$ into $k$ parts is a multiset of $k$ nonzero elements in $\F_q^*$ whose sum is $b$. We denote by $P_k(b)$ the number of partitions of $b$ into $k$ parts over $\F_q$. Using a previous result of Li \cite{li2008} and the inclusion-exclusion principle, Muratovi\'{c}-Ribi\'{c} and Wang obtained an explicit formula for $P_k(b)$. They proved the following theorem:

\begin{theorem}[\cite{MW2013}, Theorem 1]\label{thm:MW}
	Let $k$ be a non-negative integer, $\F_q$ be a finite field of $q=p^a$ elements, and $b\in\F_q$. Define $v(b)=q-1$ if $b=0$ and $v(b)=-1$ otherwise. The number of partitions of $b$ into $k$ parts over $\F_q$ is given by
	$$P_k(b)=\frac{1}{q}\binom{q+k-2}{k}$$
	if $k\not\equiv 0,1\mmod p$,
	$$P_k(b)=\frac{1}{q}\binom{q+k-2}{k}+\frac{v(b)}{q}\binom{q/p+k/p-1}{k/p}$$
	if $k\equiv0\mmod p$, and
	$$
	P_k(b)=\frac{1}{q}\binom{q+k-2}{k}-\frac{v(b)}{q}\binom{q/p+k/p-1}{k/p}$$
	if $k\equiv1\mmod p$.
\end{theorem}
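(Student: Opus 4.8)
The plan is to apply the specialization \eqref{eq:MX} of Theorem \ref{thm:j}. Take $D=\F_q^*$ and let $X\subseteq(\F_q^*)^k$ be the set of ordered $k$-tuples $(x_1,\dots,x_k)$ with $x_1+\cdots+x_k=b$; permuting coordinates leaves the sum unchanged, so $X$ is symmetric, and a partition of $b$ into $k$ parts over $\F_q$ is precisely an element of $\mathcal{M}(X)$, whence $P_k(b)=|\mathcal{M}(X)|$. Since a part may be repeated up to $k$ times, formula \eqref{eq:MX} applies and gives
\begin{equation*}
P_k(b)=\frac{1}{k!}\sum_{\tau\in S_k}|X_\tau|.
\end{equation*}

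Next I would evaluate $|X_\tau|$. If the disjoint cycles of $\tau$ have lengths $\ell_1,\dots,\ell_{c(\tau)}$, an element of $X_\tau$ is constant on each cycle, so $|X_\tau|$ is the number of $(y_1,\dots,y_{c(\tau)})\in(\F_q^*)^{c(\tau)}$ with $\ell_1y_1+\cdots+\ell_{c(\tau)}y_{c(\tau)}=b$, the coefficients $\ell_i$ being read modulo $p$ in $\F_q$. A routine additive-character computation (this is the count of Li \cite{li2008}) yields
\begin{equation*}
|X_\tau|=\frac{1}{q}\Bigl((q-1)^{c(\tau)}+v(b)\prod_{i=1}^{c(\tau)}w(\ell_i)\Bigr),\qquad w(\ell)=\begin{cases}q-1,& p\mid\ell,\\ -1,& p\nmid\ell,\end{cases}
\end{equation*}
with $v(b)$ as in the statement; indeed $v(b)=\sum_{\chi\neq\chi_0}\chi(-b)$, the sum running over the nontrivial additive characters of $\F_q$.

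Substituting this into the first display splits $P_k(b)$ into two sums over $S_k$, each of which I would compute by an exponential generating function — the cycle-length specialization of \eqref{eq:exgen}. The first sum contributes $\frac1q\cdot\frac1{k!}\sum_{\tau\in S_k}(q-1)^{c(\tau)}=\frac1q[t^k](1-t)^{-(q-1)}=\frac1q\binom{q+k-2}{k}$. For the second, $w$ is multiplicative over cycles, so $\sum_{k\geq0}\frac{t^k}{k!}\sum_{\tau\in S_k}\prod_i w(\ell_i)=\exp\bigl(\sum_{\ell\geq1}w(\ell)\tfrac{t^\ell}{\ell}\bigr)$; feeding in $\sum_{\ell\geq1}\tfrac{t^\ell}{\ell}=-\log(1-t)$ and $\sum_{p\mid\ell}\tfrac{t^\ell}{\ell}=-\tfrac1p\log(1-t^p)$ collapses the exponent to $\log(1-t)-\tfrac qp\log(1-t^p)$, so this generating function equals $(1-t)(1-t^p)^{-q/p}$. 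Extracting $[t^k]$ and separating the cases $k\equiv0$, $k\equiv1$ and $k\not\equiv0,1\pmod p$ then reproduces the three formulas in the statement.

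No single step is difficult; the two places that need care are the character-sum evaluation of $|X_\tau|$ and tracking the $p$-divisibility of cycle lengths when assembling the generating function. It is exactly this divisibility that produces the correction terms when $p\mid k$ or $p\mid k-1$, and collapsing the exponent to $-\tfrac qp\log(1-t^p)$ (rather than leaving it as $-\tfrac1p\log(1-t^p)-\tfrac{q-1}{p}\log(1-t^p)$) is what makes the final coefficient extraction clean.
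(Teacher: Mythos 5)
Your proof is correct, and it reaches the same three formulas, but it takes a recognizably different route through the same sieve. The paper applies \eqref{eq:MX} to $X=\{(x_1,\dots,x_k)\in\F_q^k: \sum x_i=b\}$ (parts allowed to be zero), so that $|X_\tau|$ is just the solution count of a linear equation over $\F_q$ ($q^{m-1}$, or $(v(b)+1)q^{m-1}$ when every cycle length is divisible by $p$); it then evaluates the permutation sum via Stirling numbers of the first kind together with the cited Lemma \ref{lem:LD}, obtaining $\hat P_k(b)$, the number of partitions into \emph{at most} $k$ parts, and finally recovers $P_k(b)=\hat P_k(b)-\hat P_{k-1}(b)$ by a case analysis on $k,k-1 \bmod p$. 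You instead work directly over $\F_q^*$, so $P_k(b)=|\mathcal{M}(X)|$ with no ``at most $k$ parts'' detour, at the price of an additive-character evaluation of $|X_\tau|=\frac1q\bigl((q-1)^{c(\tau)}+v(b)\prod_i w(\ell_i)\bigr)$; you then feed the multiplicative weights $q-1$ and $w(\ell)$ into the cycle-index exponential generating function \eqref{eq:exgen}, where the factor $(1-t)(1-t^p)^{-q/p}$ produces the $p\mid k$ and $p\mid k-1$ corrections in one stroke (this also replaces Lemma \ref{lem:LD}, which is essentially the coefficient extraction from $(1-t^p)^{-q/p}$). Both computations check out — in particular your identities $\sum_{\chi\neq\chi_0}\chi(-b)=v(b)$ and $\frac1{k!}\sum_{\tau}(q-1)^{c(\tau)}=\binom{q+k-2}{k}$ are right, and your $[t^k]$ and $[t^{k-1}]$ extractions from $(1-t^p)^{-q/p}$ give exactly the stated correction terms (with $k/p$ read as $\lfloor k/p\rfloor$ in the $k\equiv1$ case, as in the source). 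The paper's version is slightly more elementary (no characters); yours is more self-contained (no difference trick, no external lemma) and generalizes more readily to weighted sums.
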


We apply the sieve formula \eqref{eq:MX} to give a direct proof of Theorem \ref{thm:MW}, which avoids using the inclusion-exclusion principle in Muratovi\'{c}-Ribi\'{c} and Wang's proof. First of all, we state a lemma. 

\begin{lemma}[\cite{LD}, Lemma 3.1]\label{lem:LD}
	Assume $p\mid k$. Let $p(k,i)$ be the number of permutations in $S_k$ of $i$ cycles with the length of its each cycle divisible by $p$. Then we have
	$$\sum_{i=1}^k p(k,i)q^i=k!\binom{q/p+k/p-1}{k/p}.$$
\end{lemma}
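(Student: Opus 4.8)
The plan is to recognize the sum $\sum_{i=1}^k p(k,i)q^i$ as a cycle-weighted enumeration of permutations and to evaluate it via the exponential generating function \eqref{eq:exgen}. If $\sigma \in S_k$ has type $(c_1,\dots,c_k)$, then $q^{c(\sigma)} = q^{c_1+\cdots+c_k} = \prod_{\ell=1}^{k} q^{c_\ell}$, and the requirement that every cycle length of $\sigma$ be divisible by $p$ is precisely the requirement $c_\ell = 0$ whenever $p \nmid \ell$. Grouping the permutations counted by $\sum_i p(k,i)q^i$ according to their type therefore gives
$$\sum_{i=1}^k p(k,i) q^i = \sum_{\substack{\sum \ell c_\ell = k \\ c_\ell = 0 \text{ if } p \nmid \ell}} N(c_1,\dots,c_k)\, q^{c_1+\cdots+c_k},$$
which, by \eqref{eq:exgen} with the substitution $t_\ell = q$ for $p\mid\ell$ and $t_\ell = 0$ otherwise, equals $k!$ times the coefficient of $u^k$ in $\exp\bigl(\sum_{\ell:\, p\mid\ell} q\, u^\ell/\ell\bigr)$ (note that $(q\cdot 1_{p\mid\ell})^{c_\ell}$ vanishes as soon as some $c_\ell > 0$ with $p\nmid\ell$, and is $q^{c_\ell}$ otherwise).

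Next I would simplify the generating function. Writing $\ell = pm$ for the indices with $p\mid\ell$, one has $\sum_{\ell:\, p\mid\ell} u^\ell/\ell = \tfrac1p\sum_{m\geq 1}(u^p)^m/m = -\tfrac1p\log(1-u^p)$, so
$$\exp\Bigl(\sum_{\ell:\, p\mid\ell} q\,\frac{u^\ell}{\ell}\Bigr) = \exp\Bigl(-\frac{q}{p}\log(1-u^p)\Bigr) = (1-u^p)^{-q/p}.$$
By the formal binomial series, $(1-u^p)^{-q/p} = \sum_{j\geq 0}\binom{q/p+j-1}{j}u^{pj}$, and since $p\mid k$ the coefficient of $u^k$ is obtained at $j = k/p$, namely $\binom{q/p+k/p-1}{k/p}$. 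Multiplying by $k!$ gives the asserted identity.

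As for rigour, I would treat $q$ as a formal indeterminate; both sides of the claimed identity are polynomials in $q$ of degree $k/p$, so establishing it for an indeterminate $q$ establishes it for every integer value, and with this convention the manipulations above are identities of formal power series in $u$ over $\Q(q)$ needing no analytic justification. I do not anticipate a real obstacle: the only points requiring a little care are verifying the substitution $t_\ell \mapsto q\cdot 1_{p\mid\ell}$ into \eqref{eq:exgen} (in particular that it kills exactly the types with a forbidden cycle length) and the factorization $q^{c(\sigma)} = \prod_\ell q^{c_\ell}$; after that the argument is a routine simplification of the logarithm followed by a binomial expansion.
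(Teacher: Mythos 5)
Your proof is correct. The paper itself gives no proof of this lemma (it is quoted directly from \cite{LD}), and your argument---substituting $t_\ell = q\cdot 1_{p\mid \ell}$ into the exponential generating function \eqref{eq:exgen}, simplifying the logarithm to $(1-u^p)^{-q/p}$, and extracting the coefficient of $u^k$ by the binomial series---is exactly the standard derivation and the same technique the paper uses in Lemma \ref{lem:j}, so nothing further is needed.
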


\begin{proof}[Proof of Theorem {\rm \ref{thm:MW}}]
	Denote by $\widetilde{P}_k(b)$ the number of partitions of $b$ into at most $k$ parts in $\F_q$, that is, the number of multisets of $k$ elements in $\F_q$ whose sum is $b$. It is not hard to see that $P_k(b)=\widetilde{P}_k(b)-\widetilde{P}_{k-1}(b)$. Thus it is sufficient to determine $\widetilde{P}_k(b)$ which, by definition, is the cardinality of the set $\M(X)$ with $X$
	given by
 $$X=\{(x_1,x_2,\dots,x_k)\in \F_q^k:x_1+x_2+\cdots+x_k=b\}.$$
Applying the sieve formula \eqref{eq:MX}, we have
\begin{align}\label{eq:Pk}
\widetilde{P}_k(b)=\frac{1}{k!}\sum_{\tau\in S_k} |X_\tau|.
\end{align}

Suppose that $\tau$ has a disjoint cycle decomposition $\tau=\tau_1\tau_2\cdots \tau_{m}$ and the length of the cycle $\tau_i$ is $\ell_i$, $1\leq i\leq m$. Then we have
$$X_\tau=\{(x_1,x_2,\dots,x_m)\in \F_q^m:\ell_1x_1+\ell_2x_2+\cdots +\ell_mx_m=b\}.$$
If  all the $\ell_i$'s vanish in $\F_q$, that is, $p\mid \ell_i$ for $1\leq i\leq m$, then the above linear equation has $(v(b)+1)q^{m-1}$  solutions and thus $|X_\tau|=(v(b)+1)q^{m-1}$. In particular, in this case, we have $p\mid k$ since $\ell_1+\ell_2+\cdots+\ell_m=k$. Otherwise, the linear equation has $q^{m-1}$ solutions and thus $|X_\tau|=q^{m-1}$. 

When $p\nmid k$, the $\ell_i$'s cannot vanish simultaneously, so we have $|X_\tau|=q^{m-1}$, where $m$ is the number of disjoint cycles of $\tau$. Substituting this into \eqref{eq:Pk}, we conclude that
\begin{align}
\widetilde{P}_k(b)&=\frac{1}{k!}\sum_{i=1}^k c(k,i)q^{i-1}=\frac{1}{q}\binom{q+k-1}{k},
\end{align}
where $c(k,i)$ denotes the unsigned Stirling number of the first kind which counts the number of permutations in $S_k$ with exactly $i$ cycles, and we used the equality $\sum_{i=0}^k c(k,i)x^k=(x+k-1)_k$.

When $p\mid k$, according to the previous discussion, we have
\begin{align*}
\widetilde{P}_k(b)&=\frac{1}{k!}\Big(\sum_{i=1}^k (c(k,i)-p(k,i))q^{i-1}+\sum_{i=1}^k p(k,i)(v(b)+1)q^{i-1}\Big)\\
&=\frac{1}{k!}\Big(\frac{1}{q}\sum_{i=1}^k c(k,i)q^{i}+\frac{v(b)}{q}\sum_{i=1}^k q^i\Big).
\end{align*}
 Using Lemma \ref{lem:LD}, we conclude that
\begin{align*}
\widetilde{P}_k(b)=\frac{1}{q}\binom{q+k-1}{k}+\frac{v(b)}{q}\binom{q/p+k/p-1}{k/p}.
\end{align*}

Finally, noting that $P_k(b)=\widetilde{P}_k(b)-\widetilde{P}_{k-1}(b)$, a discussion depending on whether $k\equiv 0\mmod p$ or $k\equiv 1\mmod p$ completes the proof.

\end{proof}

\subsection{Bijection between necklaces and zero-sum multisets} In his book \cite{stanley}, Stanley raised a bijective proof problem asking for a bijection between  cyclic necklaces with at most two colors and subsets of $\Z/n\Z$ whose elements sum to zero, when $n$ is odd. The problem was answered by Chan \cite{Chan} recently and he generalized the problem to $q$-colored necklaces and multisets which is stated as follows.
\begin{problem}
	Consider these two distinct combinatorial objects: (1) the cyclic necklaces of length $n$ with at most $q$ colors, and (2) the multisets of integers modulo $n$ with elements summing to zero and with the multiplicity of each element being strictly less than $q$. When $q$ and $n$ are coprime, show that these two objects have the same cardinality  and construct a bijection between these two objects.
\end{problem}

In \cite{Chan}, Chan gave a proof of the equinumerosity of these two objects which is not bijective. Additionally, when $q$ is a prime power, he constructed a bijection between these two objects by viewing necklaces as cyclic polynomials over the finite field of size $q$. Note that specializing to $q=2$ answers the bijective proof problem raised by Stanely. Since the bijection that he constructed  relies on finite fields, it fails to work when $q$ is not a prime power. Thus the problem remains open when $q$ is not a prime power.

We would like to use our sieve formula to give another proof of the equinumerosity of these two objects, which is not bijective either. We believe that this proof might give some new insights into this problem. We shall prove the following theorem:

\begin{theorem}\label{thm:NF}
Let $q$ and $n$ be two coprime positive integers.	Let $\mathcal{N}$ denote the set of cyclic necklaces of length $n$ for which the color of each bead is drawn from a color set of size $q$, and let $\mathcal{F}$ denote the set of multisets of elements in $\Z/n\Z$ with element summing to zero and with the multiplicity of each element being strictly less than $q$.  Then we have
	\begin{align}
	|\mathcal{N}|=|\mathcal{F}|=\frac{1}{n}\sum_{e\mid n} \phi(e)q^{n/e},
	\end{align}
	where $\phi$ is Euler's totient function.
\end{theorem}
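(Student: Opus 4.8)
The plan is to compute $|\mathcal{N}|$ and $|\mathcal{F}|$ separately and observe that the two closed formulas coincide. The necklace count $|\mathcal{N}|$ is a classical Burnside/Pólya computation: the cyclic group $\Z/n\Z$ acts on the set of $q$-colorings of $n$ beads, a rotation by $e$ fixes exactly $q^{\gcd(e,n)}$ colorings, and grouping rotations by the order $d = n/\gcd(e,n)$ of the subgroup they generate yields $|\mathcal{N}| = \frac{1}{n}\sum_{d\mid n}\phi(n/d) q^{d} = \frac{1}{n}\sum_{e\mid n}\phi(e) q^{n/e}$. I would state this as a lemma or simply cite it, since it is standard.

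The substantive part is computing $|\mathcal{F}|$ via the sieve. Here $\mathcal{F} = \bigcup_{k=0}^{n(q-1)} \mathcal{M}_{q-1}(X_k)$ where $X_k = \{(x_1,\dots,x_k)\in(\Z/n\Z)^k : \sum x_i = 0\}$, a symmetric subset of $(\Z/n\Z)^k$. By Theorem~\ref{thm:j} (formula \eqref{eq:Mj} with $j = q-1$),
\[
|\mathcal{M}_{q-1}(X_k)| = \frac{1}{k!}\sum_{\tau\in S_k} w_{q-1}(\tau)\,|(X_k)_\tau|.
\]
If $\tau$ has cycle lengths $\ell_1,\dots,\ell_m$, then $(X_k)_\tau = \{(y_1,\dots,y_m) : \sum \ell_i y_i = 0 \text{ in } \Z/n\Z\}$, so $|(X_k)_\tau| = n^{m-1}\cdot |\{\text{extra constraint}\}|$; more precisely, since $\gcd(q,n)=1$ I expect each $\ell_i$ to be a unit mod $n$ only when $\gcd(\ell_i,n)=1$, but the relevant point is that the number of solutions of the single linear equation $\sum \ell_i y_i \equiv 0$ is $n^{m-1}\gcd(\ell_1,\dots,\ell_m,n)$. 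The next step is to sum over all $k$ and exchange the order of summation, organizing the sum over pairs $(k,\tau)$ by the multiset of cycle lengths. The key mechanism is that $w_{q-1}(\tau) = \prod_i (1 - q\cdot 1_{q\mid \ell_i})$ is a product over cycles, so the generating function over all $k$ and all $\tau$ factors as an exponential: using \eqref{eq:exgen} with $t_\ell$ carrying the weight $(1 - q\cdot 1_{q\mid\ell})$ times the per-cycle contribution to $|(X_k)_\tau|$, one gets
\[
\sum_{k\geq 0} |\mathcal{M}_{q-1}(X_k)|
= \sum_{k\geq 0}\frac{1}{k!}\sum_{\tau\in S_k} w_{q-1}(\tau)|(X_k)_\tau|,
\]
and the right side should collapse to $\frac{1}{n}\sum_{e\mid n}\phi(e)q^{n/e}$ after using the coprimality $\gcd(q,n)=1$ to control which $\gcd(\ell_i,n)$ and $1_{q\mid\ell_i}$ terms survive. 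I would carry this out by passing to the generating function $\sum_\ell (\text{cycle weight})\, u^\ell/\ell$ inside the $\exp$, identifying it with $-\sum_{d\mid n}(\text{something})\log(1 - q^{?} u^{?})$-type expressions, and reading off the coefficient.

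The main obstacle I anticipate is the bookkeeping in that last generating-function collapse: one must correctly track, for each cycle length $\ell$, both the factor $\gcd(\ell,n)$ coming from $|(X_k)_\tau|$ and the factor $1_{q\mid\ell}$ coming from $w_{q-1}$, and show that after taking $\exp$ and extracting coefficients the answer matches the necklace formula term-by-term in the divisors $e\mid n$. The coprimality hypothesis $\gcd(q,n)=1$ is what makes this work — it decouples the "$q$-divisibility" constraint from the "$n$-divisibility" constraint — and I expect the cleanest route is to reindex cycle lengths $\ell$ by $d = \gcd(\ell,n)$ and, within each $d$, sum the contribution $\sum_{\ell:\gcd(\ell,n)=d}(1 - q\cdot 1_{q\mid\ell})\,u^\ell/\ell$ in closed form as a combination of logarithms, then exponentiate. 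Alternatively, one can avoid generating functions entirely by a direct double-counting argument pairing multisets in $\mathcal{F}$ with the structure already analyzed in the proof of Theorem~\ref{thm:MW}; but I expect the generating-function approach to be the most transparent given the tools assembled in the paper.
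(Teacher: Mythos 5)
Your overall strategy (Burnside for $|\mathcal{N}|$, the sieve of Theorem \ref{thm:j} plus a generating-function collapse for $|\mathcal{F}|$, with coprimality doing the final cancellation) is the same as the paper's, and your formula $|(X_k)_\tau|=n^{m-1}\gcd(\ell_1,\dots,\ell_m,n)$ is correct. But there is a genuine gap at the step you flag as ``bookkeeping'': the exponential generating function \eqref{eq:exgen} only applies when the weight attached to $\tau$ is a \emph{product of per-cycle factors} $t_{\ell_1}t_{\ell_2}\cdots t_{\ell_m}$. The quantity $n^{m-1}\gcd(\ell_1,\dots,\ell_m,n)$ is not of this form --- the gcd is a joint function of all the cycle lengths, not a product of functions of the individual $\ell_i$ --- so there is no well-defined ``per-cycle contribution to $|(X_k)_\tau|$'' to feed into $t_\ell$, and your proposed reindexing by $d=\gcd(\ell,n)$ cycle-by-cycle does not capture it. As written, the factorization into an $\exp$ of a single logarithmic series would fail.

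The missing idea, which is exactly how the paper proceeds, is to make the weight multiplicative \emph{before} invoking the generating function: expand the zero-sum indicator in additive characters of $G=\Z/n\Z$, i.e.\ $1_{x_1+\cdots+x_k=0}=\frac{1}{n}\sum_{\chi\in\widehat{G}}\chi(x_1)\cdots\chi(x_k)$, and apply the weighted sieve (Theorem \ref{thm:weightedj}) to the symmetric set $G^k$ with the symmetric weight $\chi(x_1)\cdots\chi(x_k)$. For a character $\chi$ of order $e$ the inner sum over $G^k_\tau$ factors as $\prod_{i=1}^m\bigl(\sum_{x\in G}\chi^{\ell_i}(x)\bigr)=\prod_{i=1}^m n\,1_{e\mid\ell_i}$, so for each fixed $e\mid n$ the per-cycle weight $t_\ell=(1-q\,1_{q\mid\ell})\,n\,1_{e\mid\ell}$ is genuinely multiplicative and \eqref{eq:exgen} collapses the sum to $[u^k](1-u^e)^{-n/e}(1-u^{\lcm(e,q)})^{\gcd(e,q)n/e}$ (Lemma \ref{lem:j}). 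Equivalently, one can stay with your direct count and use $\gcd(\ell_1,\dots,\ell_m,n)=\sum_{e\mid n}\phi(e)\prod_{i}1_{e\mid\ell_i}$, which is the same decomposition in disguise. Summing the coefficients over $k$ then shows the contribution of each $e$ is $q^{n/e}$ if $\gcd(e,q)=1$ and $0$ otherwise; coprimality of $q$ and $n$ removes the latter case and matches the necklace formula term by term. Without this decoupling step your argument does not go through.
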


Before proving the theorem, we state a lemma.

\begin{lemma}\label{lem:j}
Let $N(c_1,c_2\ldots,c_k)$ be the number of permutations in $S_k$ of type $(c_1,c_2,\ldots,c_k)$ and let $t_i=(1-(j+1)1_{(j+1)\mid i})n1_{e\mid i}$. Then we have
$$\sum_{\sum ic_i=k} N(c_1,c_2,\dots,c_k)t_1^{c_1}t_2^{c_2}\cdots t_k^{c_k}=k![u^k](1-u^e)^{-n/e}(1-u^{\lcm(e,j+1)})^{\gcd(e,j+1)n/e}.$$
\end{lemma}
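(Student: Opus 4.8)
The plan is to evaluate the sum $S:=\sum_{\sum ic_i=k} N(c_1,\dots,c_k)\prod_i t_i^{c_i}$ by recognizing it as the value of the cycle-index-type generating function from \eqref{eq:exgen}. First I would recall that substituting $t_i\mapsto t_i$ into the exponential generating identity \eqref{eq:exgen} gives
\begin{equation*}
\sum_{k=0}^\infty S\,\frac{u^k}{k!}=\exp\Big(\sum_{i=1}^\infty t_i\frac{u^i}{i}\Big),
\end{equation*}
so $S=k![u^k]\exp\big(\sum_{i\ge1} t_i u^i/i\big)$ once we plug in $t_i=(1-(j+1)1_{(j+1)\mid i})\,n\,1_{e\mid i}$. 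The whole problem thus reduces to simplifying the formal power series $\sum_{i\ge1} t_i u^i/i$ into a sum of logarithms, from which the $\exp$ becomes a product of powers of $(1-u^{\bullet})$.

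Next I would split $t_i$ according to its defining factors. Since $t_i$ is supported on $i$ with $e\mid i$, and within that carries a factor $1$ minus $(j+1)$ whenever $(j+1)\mid i$, I would write
\begin{equation*}
\sum_{i\ge1}\frac{t_i}{i}u^i
= n\sum_{e\mid i}\frac{u^i}{i} - n(j+1)\sum_{e\mid i,\ (j+1)\mid i}\frac{u^i}{i}.
\end{equation*}
The first sum is $n\sum_{m\ge1} u^{em}/(em) = -\tfrac{n}{e}\log(1-u^e)$. For the second sum, $e\mid i$ and $(j+1)\mid i$ together are equivalent to $\lcm(e,j+1)\mid i$; writing $L=\lcm(e,j+1)$ we get $n(j+1)\sum_{m\ge1}u^{Lm}/(Lm)= -\tfrac{n(j+1)}{L}\log(1-u^{L})$. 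Using $L=\lcm(e,j+1)=e(j+1)/\gcd(e,j+1)$, the coefficient $\tfrac{n(j+1)}{L}$ simplifies to $\tfrac{n\gcd(e,j+1)}{e}$. Exponentiating yields $(1-u^e)^{-n/e}(1-u^{L})^{\,n\gcd(e,j+1)/e}$, so $S=k![u^k]$ of exactly this product, which is the claimed identity.

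There is essentially no hard obstacle here; the argument is a direct bookkeeping exercise with the logarithm series. The only point requiring a little care is the bit of elementary number theory identifying $\{i: e\mid i\ \text{and}\ (j+1)\mid i\}$ with the multiples of $\lcm(e,j+1)$, and then the cancellation $\tfrac{n(j+1)}{\lcm(e,j+1)}=\tfrac{n\gcd(e,j+1)}{e}$ via $\lcm(e,j+1)\gcd(e,j+1)=e(j+1)$; I would state these explicitly to make the exponent match the statement. One should also note that all manipulations are purely formal in $u$, so convergence is not an issue and the identity \eqref{eq:exgen} applies verbatim with the given substitution.
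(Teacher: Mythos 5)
Your proposal is correct and follows essentially the same route as the paper: substitute $t_i$ into the exponential generating function \eqref{eq:exgen}, split the resulting series into the multiples of $e$ and the multiples of $\lcm(e,j+1)$, recognize each as a logarithm, and exponentiate, using $\lcm(e,j+1)\gcd(e,j+1)=e(j+1)$ to match the exponent. Your write-up is in fact slightly more explicit than the paper's about the identification of the doubly-divisible indices with multiples of the lcm.
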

\begin{proof}
Substituting $t_i=(1-(j+1)1_{(j+1)\mid i})n1_{e\mid i}$ into the exponential generating function \eqref{eq:exgen}, we see that the left-hand side of the above equation is
\begin{align*}
\text{LHS}&=k![u^k]\exp\Big((1-(j+1)1_{(j+1)\mid e})n\frac{u^e}{e}+(1-(j+1)1_{(j+1)\mid 2e})n\frac{u^{2e}}{2e}+\cdots\Big)\\
&=k![u^k] \exp\Big(\frac{n}{e}(u^e+\frac{u^{2e}}{2}+\cdots)-\frac{\gcd(e,j+1)n}{e}(u^{\lcm(e,j+1)}+\frac{u^{2\lcm(e,j+1)}}{2}+\cdots)\Big)\\
&=k![u^k]\exp\Big(-\frac{n}{e}\log(1-u^e)+\frac{\gcd(e,j+1)n}{e}\log(1-u^{\lcm(e,j+1)})\Big)\\
&=k![u^k](1-u^e)^{-n/e}(1-u^{\lcm(e,j+1)})^{\gcd(e,j+1)n/e}.
\end{align*}
This completes the proof.
\end{proof}

\begin{proof}[Proof of Theorem {\rm \ref{thm:NF}}]
The cyclic necklaces of length $n$ with at most $q$ color can be viewed as the equivalence class of functions from $\{1,2,\dots,n\}$ to $\{1,2,\dots,q\}$, under the action of the cyclic group $C_n$. Denote the set of the functions by $X$. Then Burnside's lemma gives
$$|\mathcal{N}|=\frac{1}{n}\sum_{g\in C_n} |X_g|,$$
where $X_g$ denotes the set of elements in $X$ that are fixed by $g$. Suppose that $g\in C_n$ is an element of order $e$. Then it is not hard to see that $
|X^g|=q^{n/e}$. Since a cyclic group has $\phi(e)$ elements of order $e$, we conclude that
\begin{align}\label{eq:N}
|\mathcal{N}|=\frac{1}{n}\sum_{e\mid n}\phi(e)q^{n/e}.
\end{align}

Now we consider the cardinality of set $\mathcal{F}$. We note that $\mathcal{F}=\bigcup_{k=0}^{n(q-1)}\M_{q-1}(X_k)$ is a disjoint union of $\M_{q-1}(X_k)$ with
$$
X_k=\{(x_1,x_2,\dots,x_k)\in (\Z/n\Z)^k:x_1+x_2+\cdots+x_k=0\}.
$$
We will use character sums to calculate $|\M_{q-1}(X_k)|$. Let  $G$ denote the additive group $\Z/n\Z$ of order $n$ and let $\widehat{G}$ be the group of all characters on $G$. Using the fact that the sum of all characters over a nonzero element of $G$ is equal to $0$, we have
\begin{align*}
|\M_{q-1}(X_k)|&=\sum_{[x_1,x_2,\dots,x_k]\in \M_{q-1}(G^k)}\frac{1}{|G|} \sum_{\chi\in\widehat{G}}\chi(x_1+x_2+\cdots+x_k)\\
&=\frac{1}{n} \sum_{\chi\in\widehat{G}}\sum_{[x_1,x_2,\dots,x_k]\in \M_{q-1}(G^k)}\chi(x_1)\chi(x_2)\cdots\chi(x_k).
\end{align*}
Applying the sieve formula \eqref{eq:Mj}, we have
\begin{align}\label{eq:Mq1}
|\M_{q-1}(X_k)|=\frac{1}{n}\sum_{\chi\in\widehat{G}}\frac{1}{k!} \sum_{\tau\in S_k} w_j(\tau)\sum_{(x_1,x_2,\dots,x_k)\in G^k_\tau}\chi(x_1)\chi(x_2)\cdots\chi(x_k).
\end{align}

Suppose that $\tau$ has a disjoint cycle decomposition $\tau=\tau_1\tau_2\cdots\tau_m$ and the length of $\tau_i$ is $\ell_i$, $1\leq i\leq m$. We see from the definition of $G^k_\tau$ that
\begin{align}\label{eq:character}
\sum_{(x_1,x_2,\dots,x_k)\in G^k_\tau}\chi(x_1)\chi(x_2)\cdots\chi(x_k)=\prod_{i=1}^m\Big(\sum_{x\in G}\chi^{\ell_i}(x)\Big).
\end{align}
Let $e$ be  the order of the character $\chi$. Then we have  $\sum_{x\in G}\chi^{\ell_i}(x)=|G|=n$ if $e\mid \ell_i$ and $\sum_{x\in G}\chi^{\ell_i}(x)=0$ otherwise. This implies that the sum in the right-hand side of \eqref{eq:character} is equal to $n^m$ if $e\mid \ell_i$ for $1\leq i\leq m$ and $0$ otherwise; in particular, $e\mid k$ in the former case since $k=\ell_1+\ell_2+\cdots+\ell_m$.  Substituting this result into \eqref{eq:Mq1}, we see that
\begin{align*}
&\frac{1}{k!} \sum_{\tau\in S_k} w_j(\tau)\sum_{(x_1,x_2,\dots,x_k)\in G^k_\tau}\chi(x_1)\chi(x_2)\cdots\chi(x_k)\\
=&\frac{1}{k!} \sum_{\sum ic_i=k} \sum_{\tau\in S_k:\type(\tau)=(c_1,c_2,\dots,c_k)} \prod_{i=1}^k (1-q1_{q\mid i})^{c_i}(n1_{e\mid i})^{c_i}\\
=& \frac{1}{k!}\sum_{\sum ic_i=k} N(c_1,c_2,\dots,c_k)\prod_{i=1}^k (1-q1_{q\mid i})^{c_i}(n1_{e\mid i})^{c_i}\\
=& [u^k](1-u^e)^{-n/e}(1-u^{\lcm(e,q)})^{\gcd(e,q)n/e},
\end{align*}
where we used Lemma \ref{lem:j} in the last step. Therefore $|\M_{q-1}(X_k)|$ is simplified as
$$
|\M_{q-1}(X_k)|=\frac{1}{n}\sum_{e\mid n,e\mid k}\phi(e) [u^k](1-u^e)^{-n/e}(1-u^{\lcm(e,q)})^{\gcd(e,q)n/e}.
$$

Summing over $k$, we obtain
\begin{align}\label{eq:MF}
|\mathcal{F}|=\sum_{k=0}^{n(q-1)}|\M_{q-1}(X_k)|=\frac{1}{n}\sum_{e\mid n}\phi(e) \sum_{0\leq k\leq n(q-1):e\mid k}[u^k](1-u^e)^{-n/e}(1-u^{\lcm(e,q)})^{\gcd(e,q)n/e}.
\end{align}
Set $a=\lcm(e,q)/e$ and $b=\gcd(e,q)$. Then we have
\begin{align*}
(1-u^e)^{-n/e}(1-u^{\lcm(e,q)})^{\gcd(e,q)n/e}&=(1-u^e)^{-n/e}(1-u^{ae})^{bn/e}\\
&=\frac{(1-u^{ae})^{n/e}}{(1-u^e)^{n/e}}(1-u^{ae})^{(b-1)n/e}\\
&=(1+u^e+u^{2e}+\cdots+u^{(a-1)e})^{n/e}(1-u^{ae})^{(b-1)n/e}.
\end{align*}
This implies that
$$
\sum_{0\leq k\leq n(q-1):e\mid k}[u^k](1-u^e)^{-n/e}(1-u^{\lcm(e,q)})^{\gcd(e,q)n/e}=q^{n/e}
$$
if $\gcd(e,q)=1$, and
$$\sum_{0\leq k\leq n(q-1):e\mid k}[u^k](1-u^e)^{-n/e}(1-u^{\lcm(e,q)})^{\gcd(e,q)n/e}=0$$
otherwise. Substituting this into \eqref{eq:MF}, we see that
$$|\mathcal{F}|=\frac{1}{n}\sum_{e\mid n:\gcd(e,q)=1} \phi(e)q^{n/e}.
$$
Since $q$ and $n$ are coprime, $|\mathcal{F}|$ can be simply written as
$$
|\mathcal{F}|=\frac{1}{n}\sum_{e \mid n}\phi(e)q^{n/e},
$$
which is exactly the same as \eqref{eq:N}. The proof is completed.
\end{proof}

\end{document}